\numberwithin{equation}{section}
\def\E{{\mathcal{E}}}
\def\T{{\mathcal{T}}}
\def\cal#1{{\mathcal #1}}
\def\pT{{\partial T}}
\def\bw{{\mathbf{w}}}
\def\bf{{\mathbf{f}}}
\def\bu{{\mathbf{u}}}
\def\bg{{\mathbf{g}}}
\def\bv{{\mathbf{v}}}
\def\bn{{\mathbf{n}}}
\def\be{{\mathbf{e}}}
\def\beta{{\boldsymbol{\eta}}}
\def\bvarphi{{\boldsymbol{\varphi}}}
\def\ljump{{[\![}}
\def\rjump{{]\!]}}
\def\bsigma{{\boldsymbol{\sigma}}}
\def\3bar{{|\hspace{-.02in}|\hspace{-.02in}|}}
\def\E{{\mathcal{E}}}
\def\T{{\mathcal{T}}}
\def\bpsi{\boldsymbol{\psi}}
\def\pT{{\partial T}}
\def\bw{{\mathbf{w}}}
\def\bu{{\mathbf{u}}}
\def\bg{{\mathbf{g}}}
\def\bv{{\mathbf{v}}}
\def\bn{{\mathbf{n}}}
\def\bQ{{\mathbf{Q}}}
\def\be{{\mathbf{e}}}
\def\bw{{\mathbf{w}}}
\def\bf{{\mathbf{f}}}
\def\bphi{{\boldsymbol{\phi}}}
\def\bsigma{{\boldsymbol{\sigma}}}
\def\bepsilon{{\boldsymbol{\epsilon}}}
\def\bphi{{\boldsymbol{\phi}}}
\def\ljump{{[\![}}
\def\rjump{{]\!]}}
\def\bsigma{{\boldsymbol{\sigma}}}
\newtheorem{remark}{Remark}[section]
\newtheorem{algorithm}{Auto-Stablized Weak Galerkin Algorithm}[section]
\def\ad#1{\begin{aligned}#1\end{aligned}}  \def\b#1{{\mathbf{#1}}}
\def\a#1{\begin{align*}#1\end{align*}} \def\an#1{\begin{align}#1\end{align}} 
 \def\t#1{\hbox{\rm{#1}}}
\def\p#1{\begin{pmatrix}#1\end{pmatrix}}  \numberwithin{equation}{section}
\title {An Auto-Stabilized Weak Galerkin Method  for Elasticity Interface Problems on Nonconvex Meshes }
\begin{document}

\author{
Chunmei Wang \thanks{Department of Mathematics, University of Florida, Gainesville, FL 32611, USA (chunmei.wang@ufl.edu). The research of Chunmei Wang was partially supported by National Science Foundation Award DMS-2136380.}
 \and  
 Shangyou Zhang\thanks{Department of Mathematical Sciences,  University of Delaware, Newark, DE 19716, USA (szhang@udel.edu).  }}

\maketitle

\begin{abstract}
This paper introduces an auto-stabilized weak Galerkin (WG) finite element method for elasticity interface problems on general polygonal and polyhedral meshes, without requiring convexity constraints. The method utilizes bubble functions as key analytical tools, eliminating the need for stabilizers typically used in traditional WG methods and leading to a more streamlined formulation. The proposed method is symmetric, positive definite, and easy to implement. Optimal-order error estimates are derived for the WG approximations in the discrete $H^1$-norm, assuming the exact solution has sufficient smoothness. Numerical experiments validate the accuracy and efficiency of the auto-stabilized WG method.

\end{abstract}

\begin{keywords} weak Galerkin, finite element methods, auto-stabilized, weak strain tensor, non-convex, bubble functions, ploytopal meshes,  elasticity interface problems.
\end{keywords}

\begin{AMS}
Primary, 65N30, 65N15, 65N12, 74N20; Secondary, 35B45, 35J50,
35J35
\end{AMS}

\pagestyle{myheadings}

\section{Introduction}
This paper focuses on the development of weak Galerkin (WG) finite element methods for addressing elasticity interface problems. Specifically, consider a bounded domain  $\Omega\subset \mathbb R^d$ ($d = 2, 3$) with a piecewise smooth Lipschitz boundary  $\partial \Omega$. Let $N$ and $M$ be positive integers. The domain $\Omega$ is partitioned into a collection of subdomains $\{\Omega_i\}_{i=1}^N$, each with a piecewise smooth Lipschitz boundary $\partial \Omega_i$ for $i=1, \cdots, N$.  The interface between these subdomains is denoted by  $\Gamma=\bigcup_{i=1}^N \partial \Omega_i\setminus \partial \Omega$ and can be expressed as
$$
\Gamma =\bigcup_{m=1}^{M} \Gamma_m,
$$
where for each  $m=1, \cdots, M$,  there exist 
$i, j \in \{1, \cdots, N\}$ such that $\Gamma_m=\partial \Omega_i \cap\partial \Omega_j$. The elasticity interface problem is formulated as follows: find the displacement  $\bu$ such that
\begin{equation}\label{model}
\begin{split}
\bsigma(\bu_i)&=2\mu_i\bepsilon(\bu_i)+\lambda_i\nabla \cdot \bu_i I, \qquad \text{in}\ \Omega_i, i=1, \cdots, N,\\
-\nabla \cdot \bsigma(\bu_i) &=\bf_i, \qquad\qquad\qquad\qquad\qquad \text{in}\ \Omega_i,i=1, \cdots, N,\\
\ljump \bu\rjump_{\Gamma_m}&=\bphi_m, \qquad\qquad\qquad\qquad\qquad \text{on}\ \Gamma_m, m=1,\cdots, M,\\
\ljump \bsigma(\bu)\bn\rjump_{\Gamma_m}&=\bpsi_m, \qquad\qquad\qquad\qquad\qquad \text{on}\ \Gamma_m,m=1,\cdots, M,\\
\bu_i&=\bg_i,\quad\qquad \qquad\qquad\qquad\qquad \text{on}\ \partial\Omega_i\cap \partial\Omega, i=1,\cdots, N,
\end{split}
\end{equation}
where $\bu_i=\bu|_{\Omega_i}$, $\bf_i=\bf|_{\Omega_i}$, $\bg_i=\bg|_{\Omega_i}$, $\mu_i=\mu|_{\Omega_i}$, $\lambda_i=\lambda|_{\Omega_i}$,   $[\![\bsigma(\bu)  \bn]\!]_{\Gamma_m}=\bsigma(\bu_i)  \bn_i+ \bsigma(\bu_j)  \bn_j$, and $[\![\bu]\!]_{\Gamma_m}=\bu_i|_{  \Gamma_m}-\bu_j|_{\Gamma_m}$. Here, $\bn_i$ and $\bn_j$ denote the outward unit normal directions to  $\partial \Omega_i\cap \Gamma_m$ and $\partial \Omega_j\cap \Gamma_m$, respectively.  
Throughout this paper, boldface letters represent vector-valued functions and their associated function spaces. The strain tensor is given by $\bepsilon=\frac{1}{2} (\nabla \bu+\nabla\bu^T)$, $\bf$ represents the body force, and $\lambda$ and $\mu$ are the positive Lam$\acute{e}$ parameters. These parameters are related to the Young’s modulus  $E$ and the Poisson's ratio $\nu$  through the following expressions:
$$
\lambda=\frac{E\nu}{(1-2\nu)(1+\nu)}, \qquad \mu=\frac{E}{2(1+\nu)}.
$$
The Lam$\acute{e}$ parameters $\lambda$ and $\mu$ are assumed to be piecewise smooth   with respect to the partition $\Omega=\bigcup_{i=1}^N \Omega_i$ and  $\frac{\lambda}{\mu}=\frac{2\nu}{1-2\nu}$ is assumed to be bounded.

The weak formulation of the elasticity interface model problem \eqref{model} is as follows: Find $\bu$ such that $\bu_i=\bg_i$ on $\partial\Omega_i\cap\partial \Omega$ ($i=1, \cdots, N$),  and
\begin{equation}\label{weakform}
(2\mu\epsilon(\bu), \epsilon(\bv))+(\lambda \nabla\cdot\bu, \nabla\cdot\bv)=(\bf, \bv)+\sum_{m=1}^M\langle\bpsi_m,\bv\rangle_{\Gamma_m}, \quad\forall \bv\in [H_0^1(\Omega)]^d.
\end{equation}
 
Elasticity interface problems are central to continuum mechanics, where elasticity theory and the associated partial differential equations (PDEs) are used to describe a wide range of material behaviors. These problems are especially important in scenarios involving voids, pores, inclusions, dislocations, cracks, or composite structures within materials \cite{w1, w2, w3, w4}, where an interface-based description becomes indispensable. Such problems find significant applications in tissue engineering, biomedical science, and biophysics \cite{w5, w6, w7}. In many cases, interfaces are dynamic, as in fluid–structure interfacial boundaries \cite{w8}, and material properties often exhibit discontinuities across these interfaces.
For elastic bodies composed of heterogeneous materials with distinct physical properties, the governing equations apply individually to each subdomain. The solution must additionally satisfy the displacement and traction jump conditions at the interfaces between different materials, alongside the standard boundary conditions.
In linear elasticity theory, the stress–strain relationship is defined by constitutive equations. For isotropic and homogeneous materials, these equations can be expressed using any two of the following parameters: bulk modulus, Young’s modulus, Lamé’s first parameter, shear modulus, Poisson’s ratio, or P-wave modulus \cite{w9}. When these parameters vary spatially, the constitutive equations describe the elastic properties of isotropic inhomogeneous media. In seismic wave modeling, such inhomogeneity is often captured by allowing Lamé’s parameters to depend on position \cite{w10}.

A variety of numerical methods have been developed to solve elasticity interface problems. The immersed interface method (IIM) was introduced for elasticity problems in isotropic homogeneous media \cite{w15, w16, w17}, and a second-order sharp numerical method was designed for linear elasticity equations \cite{w18}. Finite element methods, including the partition of unity method (PUM), generalized finite element method (GFEM), and extended finite element method (XFEM), were developed to handle the non-smooth solution behavior across interfaces by incorporating enrichment functions into the approximations \cite{w2, w3, w4}. Discontinuous Galerkin methods have been employed to model strong and weak discontinuities through weak enforcement of continuity conditions \cite{w19, w20, w21}.
The immersed finite element method (IFEM) was proposed to address elasticity problems with inhomogeneous jump conditions \cite{w22, w23, w24}, and a sharp-edged interface approach was developed for a specific class of elasticity interface problems \cite{w25}. The bilinear IFEM was further enhanced into a locking-free version \cite{w26, w27}. Additionally, the immersed meshfree Galerkin method was proposed for composite solids \cite{w28}, while a Nitsche-type method was developed to tackle elasticity interface problems \cite{w29}.  

  The  WG  finite element method has transformed the numerical framework for solving  PDEs. By employing distributions and piecewise polynomials, WG extends beyond the limitations of traditional finite element methods. Unlike classical approaches, WG relaxes strict regularity requirements for function approximations, instead utilizing well-designed stabilizers to ensure stability and accuracy. Recent research has thoroughly demonstrated WG's versatility in addressing a broad spectrum of model PDEs, establishing it as a robust and dependable tool in computational science \cite{wg1, wg2, wg3, wg4, wg5, wg6, wg7, wg8, wg9, wg10, wg11, wg12, wg13, wg14, wg15, wg16, wg17, wg18, wg19, wg20, wg21, itera, wy3655}. Its strength lies in leveraging weak derivatives and weak continuities to design numerical schemes grounded in the weak forms of underlying PDEs.
A significant development within the WG framework is the Primal-Dual Weak Galerkin (PDWG) method \cite{pdwg1, pdwg2, pdwg3, pdwg4, pdwg5, pdwg6, pdwg7, pdwg8, pdwg9, pdwg10, pdwg11, pdwg12, pdwg13, pdwg14, pdwg15}. PDWG formulates numerical solutions as constrained minimizations of functionals, where the constraints are derived from the weak formulation of PDEs using weak derivatives. This approach leads to an Euler-Lagrange system that incorporates both the primal variable and the dual variable (Lagrange multiplier), resulting in a symmetric and efficient numerical scheme.

This paper introduces an auto-stabilized weak Galerkin  finite element method that eliminates the reliance on stabilizers. The proposed approach is applicable to polytopal meshes without convexity constraints. The key innovation enabling this advancement is the use of bubble functions. As a trade-off, this method requires higher-degree polynomials for computing the discrete weak strain tensor and discrete weak divergence.
Despite this requirement, our method preserves the size and global sparsity of the stiffness matrix, significantly reducing programming complexity compared to traditional stabilizer-dependent WG methods. Theoretical analysis demonstrates that the WG approximations achieve optimal error estimates in the discrete 
$H^1$
  norm. By offering a stabilizer-free WG method that maintains high performance while reducing computational complexity, this work makes a notable contribution to the development of finite element methods on non-convex polytopal meshes.

The structure of this paper is as follows: Section 2 provides a concise overview of weak differential operators and their discrete counterparts. Section 3 introduces the auto-stabilized weak Galerkin scheme, detailing its construction and features. Section 4 establishes the theoretical groundwork by proving the existence and uniqueness of the solution for the algorithm developed in Section 3. In Section 5, we derive the error equation associated with the method, which leads naturally to Section 6, where error estimates in the energy norm are rigorously analyzed. Finally, Section 7 presents numerical experiments that corroborate the theoretical results and demonstrate the effectiveness of the proposed approach.

Throughout this paper, we adopt standard notations. Let $D$ represent any open bounded domain in $\mathbb{R}^d$ with a Lipschitz continuous boundary.  The inner product, semi-norm and norm in the Sobolev space $H^s(D)$ for any integer $s\geq 0$ are denoted by $(\cdot,\cdot)_{s,D}$, $|\cdot|_{s,D}$ and $\|\cdot\|_{s,D}$, respectively. For simplicity, when the domain $D$ is chosen as $D=\Omega$, the subscript $D$ is  omitted from the notations. In the case where $s=0$, the notations $(\cdot,\cdot)_{0,D}$, $|\cdot|_{0,D}$ and $\|\cdot\|_{0,D}$ are further simplified as $(\cdot,\cdot)_D$, $|\cdot|_D$ and $\|\cdot\|_D$, respectively.

\section{Discrete Weak Strain Tensor and Discrete Weak Divergence}\label{Section:Hessian}
In this section, we provide a brief overview of the weak strain tensor and weak divergence, along with their discrete counterparts as introduced in \cite{wg10, wg18}.

Let $T$ be a polytopal element with boundary $\partial T$. A weak function on $T$  is defined as   $\bv=\{\bv_0, \bv_b\}$, where $\bv_0\in [L^2(T)]^d$ represents the value of $\bv$ in the interior of $T$, and $\bv_b\in [L^{2}(\partial T)]^d$ represents   the value of $\bv$   on the boundary of $T$. In general, $\bv_b$ is treated as independent of the trace of $\bv_0$. A special case occurs when $\bv_b= \bv_0|_{\partial T}$, where the function $
\bv=\{\bv_0, \bv_b\}$ is fully determined by $\bv_0$ and can be denoted simply as $\bv=\bv_0$.
 
 The space of all weak functions on $T$,  denote by $W(T)$, is defined as:
$$ W(T)=\{v=\{\bv_0,\bv_b\}: \bv_0\in [L^2(T)]^d, \bv_b\in [L^{2}(\partial
 T)]^d\}. $$
 
 The weak gradient, denoted by $\nabla_{w}$, is a linear
 operator mapping $W(T)$ to the dual space of $[H^{1}(T)]^{d\times d}$. For any
 $\bv\in W(T)$, the weak gradient  $\nabla_w \bv$ is defined as a bounded linear functional on $[H^{1}(T)]^{d\times d}$
such that
 \begin{equation*}\label{2.3}
  (\nabla _{w}\bv, \bvarphi)_T=-(\bv_0,\nabla\cdot \bvarphi)_T+
  \langle \bv_b, \bvarphi\cdot \bn \rangle_{\partial T},\quad \forall \bvarphi\in [H^{1}(T)]^{d\times d},
  \end{equation*}
 where $ \bn$ denotes the outward unit normal vector to $\partial T$.
 
 For any non-negative integer $r$, let $P_r(T)$  denote the space of polynomials on $T$ with total degree at most 
 $r$. A discrete weak
 gradient on $T$, denoted by $\nabla_{w, r_1, T}$,  is defined as a linear operator mapping 
  $W(T)$ to $[P_{r_1}(T)]^{d\times d}$. For any $\bv\in W(T)$, the discrete weak gradient
 $\nabla_{w, r_1, T}\bv$ is the unique polynomial matrix in $[P_{r_1}(T)]^{d\times d}$ that satisfies:
 \begin{equation*}\label{2.4}
  (\nabla_{w,r_1,T} \bv, \bvarphi)_T=-(\bv_0,\nabla\cdot \bvarphi)_T+
  \langle \bv_b,  \bvarphi\cdot \bn \rangle_{\partial T},\quad \forall \bvarphi \in [P_{r_1}(T)]^{d\times d}.
  \end{equation*}
  
The discrete weak strain tensor is defined as:
$$
\epsilon_{w,r_1,T}(\bu)=\frac{1}{2}(\nabla_{w,r_1,T}\bu+\nabla_{w,r_1,T}\bu^T).
$$
For any $\bv\in W(T)$, the discrete weak strain tensor, denoted by
 $\epsilon_{w, r_1, T}(\bv)$,  is the unique polynomial matrix in $[P_{r_1}(T)]^{d\times d}$ that satisfies:
 \begin{equation}\label{2.5}
  (\epsilon_{w,r_1,T} (\bv), \bvarphi)_T=-(\bv_0,\nabla  \cdot \frac{1}{2}(\bvarphi+\bvarphi^T) )_T+
  \langle \bv_b,  \frac{1}{2}(\bvarphi+\bvarphi^T)\cdot \bn \rangle_{\partial T}, 
  \end{equation}
for all $\bvarphi \in [P_{r_1}(T)]^{d\times d}$.
  
 If $\bv_0\in [H^1(T)]^d$ is sufficiently smooth, applying the standard integration by parts to the first term on the right-hand side of \eqref{2.5} leads to:
 \begin{equation}\label{2.5new}
 (\epsilon_{w,r_1,T} (\bv), \bvarphi)_T=(\epsilon( \bv_0), \bvarphi)_T+
  \langle \bv_b-\bv_0, \frac{1}{2}( \bvarphi+\bvarphi^T)\cdot \bn \rangle_{\partial T}.  
  \end{equation} 
 for all $\bvarphi \in [P_{r_1}(T)]^{d\times d}$.

 The weak divergence of $\bv\in W(T)$, denoted by $\nabla_w\cdot \bv$, is a bounded linear functional in the Sobolev space $H^1(T)$. Its action on any $\phi\in H^1(T)$ is given by
\begin{equation*}\label{div}
    (\nabla_w\cdot \bv, \phi)_T=-(\bv_0, \nabla\phi)_T+\langle \bv_b\cdot\bn, \phi\rangle_{\partial T}.
\end{equation*}

The discrete weak divergence of $\bv\in W(T)$, denoted by $\nabla_{w, r_2, T}\cdot \bv$, is the unique polynomial in $P_{r_2}(T)$ that satisfies:
\begin{equation}\label{disdiv}
    (\nabla_{w, r_2, T}\cdot \bv, \phi)_T=-(\bv_0, \nabla\phi)_T+\langle \bv_b\cdot\bn, \phi\rangle_{\partial T},
\end{equation}
for all $\phi\in P_{r_2}(T)$. 

 For a smooth $\bv_0\in
 [H^1(T)]^d$,  applying the standard integration by parts to the first term on the right-hand side  of (\ref{disdiv})  gives
\begin{equation}\label{disdivnew}
    (\nabla_{w, r_2, T}\cdot \bv, \phi)_T= (\nabla\cdot \bv_0,  \phi)_T+\langle (\bv_b-\bv_0)\cdot\bn, \phi\rangle_{\partial T},
\end{equation}
for all $\phi\in P_{r_2}(T)$.

\section{Auto-Stabilized Weak Galerkin Algorithm}\label{Section:WGFEM}
 
 Let ${\cal T}_h$ be a finite element partition of the domain
 $\Omega\subset \mathbb R^d$ into polytopal elements, where   ${\cal
 T}_h$ is assumed to satisfy the shape-regularity condition as defined in \cite{wy3655}. Denote by ${\mathcal E}_h$ the set of all edges (or faces) of 
 ${\cal T}_h$, and let ${\mathcal E}_h^0={\mathcal E}_h \setminus
 \partial\Omega$ be the set of interior edges or faces. For any element  $T\in {\cal T}_h$, its diameter is denoted by  $h_T$, and the mesh size of the partition is  $h=\max_{T\in {\cal
 T}_h}h_T$.

For each element  $T\in {\cal T}_h$, let $RM(T)$ represent the space of rigid motions on $T$,  defined as
$$
RM(T)=\{\textbf{a}+\eta \textbf{x}: \textbf{a}\in \mathbb R^d, \eta\in so(d)\},
$$
where $\textbf{x}$ is the position vector on $T$ and $so(d)$ denotes the space of skew-symmetric $d\times d$ matrices. The trace of the rigid motion on each edge $e\subset T$ forms a finite-dimensional space denoted by  $P_{RM} (e)$,  i.e.,
$$
P_{RM} (e)=\{ \bv\in [L^2(e)]^d: \bv=\tilde{\bv}|_e \ \text{for some}\ \tilde{\bv}\in RM(T), e\subset \partial T\}.
$$

 For each element $T\in\T_h$, the local weak finite element space is defined as:
 \begin{equation}\label{Vk}
 V(k, T)=\{\{\bv_0,\bv_b\}: \bv_0\in [P_k(T)]^d,\bv_b\in
S_k(e),  e\subset \partial T\}.   
 \end{equation}
Here, $S_k(e)= [P_{k-1}(e)]^d+P_{RM} (e)$. Since $P_{RM} (e)\subset P_1(e)$,   the boundary component $S_k(e)$ simplifies to $[P_{k-1} (e)]^d$ for $k>1$, and to $P_{RM} (e)$ when $k=1$.

By assembling $V(k, T)$ across all elements  $T\in {\cal T}_h$ and imposing continuity on the interior interfaces $\E_h^0$,
the global weak finite element space is defined as:
\begin{equation}\label{Vh}
 V_h=\big\{\{\bv_0,\bv_b\}:\ \{\bv_0,\bv_b\}|_T\in V(k,  T),
 \forall T\in {\cal T}_h \big\}.
 \end{equation} 
The subspace of $V_h$ with vanishing boundary values on $\partial\Omega$ is given by:
\begin{equation}\label{Vh0}
V_h^0=\{\{\bv_0,\bv_b\}\in V_h: \bv_b=0 \ \text{on}\ \partial\Omega\}.
\end{equation}

To simplify notation, the discrete weak strain tensor $\epsilon_{w, r_1, T}\bv$ and the discrete weak divergence $\nabla_{w, r_2, T} \cdot\bv$  are denoted by  
  $\epsilon_{w}\bv$ and $\nabla_{w} \cdot\bv$, respectively. These quantities are computed locally on each element $T$ using the definitions in 
\eqref{2.5} and \eqref{disdiv}: 
\an{\label{w-d} \ad{ 
(\epsilon_{w} \bv)|_T & = \epsilon_{w, r_1, T}(\bv |_T), \qquad \forall T\in \T_h, \\
(\nabla_{w}\cdot \bv)|_T&= \nabla_{w, r_2, T}\cdot(\bv |_T), \qquad \forall T\in \T_h. } }

For any $\bu, \bv\in V_h$, the following bilinear form is introduced:
\begin{equation}\label{EQ:local-bterm}
  a(\bu, \bv)=  \sum_{T\in {\cal T}_h}a_T(\bu, \bv),   
\end{equation} 
where
\begin{equation*} 
a_T(\bu, \bv)= (2\mu \epsilon_w(\bu), \epsilon_w(\bv))_T+(\lambda \nabla_w\cdot\bu, \nabla_w\cdot\bv)_T. 
\end{equation*}

Let $\bQ_b$ denote the $L^2$ projection onto the space $P_k(e)$. The auto-stabilized weak Galerkin scheme for the elasticity interface problem \eqref{model}, based on the variational formulation \eqref{weakform}, is described below: 
\begin{algorithm}\label{PDWG}
Find $\bu_h \in  V_{h}$, such that  
$\bu_b=\bQ_b\bg_i$ on $\partial\Omega_i\cap\partial\Omega$ for $i=1, \cdots, N$ and $\bu_b^L-\bu_b^R=\bQ_b\bphi_m$ on $\Gamma_m$ for $m=1,\cdots, M$, satisfying:
\begin{equation}\label{WG}
 a(\bu, \bv)=(\bf, \bv_0)+\sum_{m=1}^M\langle \bpsi_m, \bv_b\rangle_{\Gamma_m},  
\end{equation}
for any $\bv\in V_h^0$. Here, $\bu_b^L$ and $\bu_b^R$ represent the restrictions of $\bu_b$ to $\Omega_i$ and   $\Omega_j$, respectively, where $\Gamma=\partial\Omega_i\cap\partial\Omega_j$ for some $i$ and $j$. 
\end{algorithm}

\section{Solution Existence and Uniqueness} 
 
We begin by recalling key trace inequalities. For a shape-regular finite element partition ${\cal T}_h$ of the domain $\Omega$,  the following trace inequality holds for any element   $T\in {\cal T}_h$ and function $\phi\in H^1(T)$ \cite{wy3655}: 
\begin{equation}\label{tracein}
 \|\phi\|^2_{\partial T} \leq C(h_T^{-1}\|\phi\|_T^2+h_T \|\nabla \phi\|_T^2).
\end{equation}
For polynomial functions  $\phi$, a simplified trace inequality is used  \cite{wy3655}: 
\begin{equation}\label{trace}
\|\phi\|^2_{\partial T} \leq Ch_T^{-1}\|\phi\|_T^2.
\end{equation}

Next, we define two essential norms for error analysis. For any $\bv=\{\bv_0, \bv_b\}\in V_h$, the discrete energy norm is defined as: \begin{equation}\label{3norm}
\3bar \bv\3bar=\Big( \sum_{T\in {\cal T}_h} (2\mu\epsilon_w (\bv), \epsilon_w(\bv))_T+(\lambda \nabla_w \cdot \bv, \nabla_w \cdot \bv)_T\Big)^{\frac{1}{2}},
\end{equation}
and the discrete $H^1$ semi-norm  is given by:
\begin{equation}\label{disnorm}
\|\bv\|_{1, h}=\Big(\sum_{T\in {\cal T}_h} (2\mu\epsilon  (\bv_0), \epsilon (\bv_0))_T+(\lambda \nabla  \cdot \bv_0, \nabla  \cdot \bv_0)_T+h_T^{-1}\|\bv_0-\bv_b\|_{\partial T}^2\Big)^{\frac{1}{2}}.
\end{equation}

The auto-stabilized WG method is applicable to polytopal meshes without convexity constraints, with bubble functions serving as a key analytical tool. For completeness, we present Lemmas 4.2, 4.3, 4.5, 4.6, and 4.7 in this paper. However, the proofs of these lemmas are omitted to avoid redundancy, as they can be found in detail in \cite{wangelas}.

\begin{definition}  (Element-based bubble function)
   Let  $T\in {\cal T}_h$ be a polytopal element with $N$ edges/faces denoted by $e_1, \cdots, e_N$. Note that $T$ may be non-convex.  For each edge/face $e_i$, define a linear function  $l_i(x)$ such that $l_i(x)=0$ on $e_i$:
  $$l_i(x)=\frac{1}{h_T}\overrightarrow{AX}\cdot \bn_i, $$  where  $A$ is a point on  $e_i$,  $X=(x_1, \cdots, x_{d-1})$ is an arbitrary point on   $e_i$, $\bn_i$ is the normal vector to   $e_i$, and $h_T$ is the diameter of element $T$. 

  The bubble function for $T$ is defined as 
   $$
   \Phi_B =l^2_1(x)l^2_2(x)\cdots l^2_N(x) \in P_{2N}(T).
  $$ 
  It is straightforward to verify that  $\Phi_B=0$ on $\partial T$.    The function 
   $\Phi_B$  can be scaled such that $\Phi_B(M)=1$ where   $M$  is the barycenter of $T$.  Furthermore,  there exists a sub-domain $\hat{T}\subset T$ such that $\Phi_B\geq \rho_0$ for some constant $\rho_0>0$. In this case, we choose  $r_1=2N+k-1$ and $r_2=2N+k-1$.  
\end{definition}

\begin{lemma}\cite{wang1, wang2, wangelas}\label{norm1}
 For $\bv=\{\bv_0, \bv_b\}\in V_h$, there exists a constant $C$ such that
 $$
 \|\epsilon(\bv_0)\|_T\leq C\|\epsilon_w (\bv)\|_T.
 $$
\end{lemma}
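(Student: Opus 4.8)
The plan is to use the bubble function $\Phi_B$ associated with the element $T$ to bound $\|\epsilon(\bv_0)\|_T$ from below by a quantity expressible through $\epsilon_w(\bv)$. Since $\bv_0 \in [P_k(T)]^d$, the matrix $\epsilon(\bv_0)$ is a polynomial of degree at most $k-1$, and since $r_1 = 2N + k - 1 \geq k-1$, the test matrix $\bvarphi := \Phi_B\,\epsilon(\bv_0)$ lies in $[P_{r_1}(T)]^{d\times d}$; note also that $\bvarphi$ is symmetric, so $\frac12(\bvarphi + \bvarphi^T) = \bvarphi$. The key point is that $\Phi_B$ vanishes on $\partial T$, so plugging $\bvarphi$ into the integration-by-parts identity \eqref{2.5new} kills the boundary term and yields
\begin{equation*}
(\epsilon_w(\bv), \Phi_B\,\epsilon(\bv_0))_T = (\epsilon(\bv_0), \Phi_B\,\epsilon(\bv_0))_T = \int_T \Phi_B |\epsilon(\bv_0)|^2\,dx.
\end{equation*}

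The first step is therefore to establish the domain-inverse-type estimate
\begin{equation*}
\|\epsilon(\bv_0)\|_T^2 \leq C \int_T \Phi_B |\epsilon(\bv_0)|^2\,dx,
\end{equation*}
which follows from a scaling/compactness argument on the reference element: on the finite-dimensional space of polynomial matrices of degree $\leq k-1$, both sides are norms (the right side is a norm because $\Phi_B \geq \rho_0 > 0$ on the subdomain $\hat T \subset T$, so $\int_T \Phi_B |\cdot|^2 \geq \rho_0 \int_{\hat T} |\cdot|^2$, and a polynomial vanishing on $\hat T$ vanishes identically), and all norms on a finite-dimensional space are equivalent, with the constant controlled uniformly under shape-regularity by the usual homogeneity argument in $h_T$. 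Combining this with the identity above and then applying Cauchy–Schwarz,
\begin{equation*}
\|\epsilon(\bv_0)\|_T^2 \leq C\,(\epsilon_w(\bv), \Phi_B\,\epsilon(\bv_0))_T \leq C\,\|\epsilon_w(\bv)\|_T\,\|\Phi_B\,\epsilon(\bv_0)\|_T \leq C\,\|\epsilon_w(\bv)\|_T\,\|\epsilon(\bv_0)\|_T,
\end{equation*}
where the last inequality uses $0 \leq \Phi_B \leq 1$. Dividing through by $\|\epsilon(\bv_0)\|_T$ (the case $\epsilon(\bv_0) = 0$ being trivial) gives the claim.

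The main obstacle is the uniformity of the constant $C$ in the domain-inverse estimate $\|\epsilon(\bv_0)\|_T^2 \leq C \int_T \Phi_B |\epsilon(\bv_0)|^2\,dx$ across all elements $T$ of the family of meshes: one must check that $\Phi_B$, which depends on the geometry of $T$ (number of edges/faces, their relative positions, the non-convexity), admits a lower bound $\rho_0$ and a subdomain $\hat T$ whose relative measure is bounded below uniformly under the shape-regularity assumption. This is precisely the content built into the definition of $\Phi_B$ above and is handled in \cite{wangelas}; given that input, the compactness argument on the reference configuration transfers the bound to all $T$ by the standard scaling $x \mapsto (x - M)/h_T$. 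The remaining steps — the choice of test function, the use of \eqref{2.5new}, and Cauchy–Schwarz — are routine.
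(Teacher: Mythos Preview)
Your argument is correct and is exactly the bubble-function approach used in the references \cite{wang1, wang2, wangelas} to which the paper defers: choose $\bvarphi=\Phi_B\,\epsilon(\bv_0)\in[P_{r_1}(T)]^{d\times d}$, use \eqref{2.5new} together with $\Phi_B|_{\partial T}=0$ to kill the boundary term, invoke the finite-dimensional norm equivalence $\|\epsilon(\bv_0)\|_T^2\le C\int_T\Phi_B|\epsilon(\bv_0)|^2$ guaranteed by $\Phi_B\ge\rho_0$ on $\hat T$, and finish with Cauchy--Schwarz. One cosmetic point: the bound $\Phi_B\le 1$ holds for the \emph{unscaled} bubble (since each $|l_i|\le 1$ on $T$ by the $h_T^{-1}$ normalization), but after rescaling so that $\Phi_B(M)=1$ you only get $\|\Phi_B\|_{L^\infty(T)}\le C$ with $C$ depending on shape regularity; either way the constant is harmlessly absorbed.
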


\begin{lemma}\cite{wangelas}\label{norm2}
 For $\bv=\{\bv_0, \bv_b\}\in V_h$, there exists a constant $C$ such that
 $$
 \|\nabla\cdot\bv_0\|_T\leq C\|\nabla_w\cdot \bv\|_T.
 $$
\end{lemma}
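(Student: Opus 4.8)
\textbf{Proof proposal for Lemma \ref{norm2}.}

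The plan is to mirror the structure used for the strain-tensor estimate in Lemma \ref{norm1}, but working directly with the scalar discrete weak divergence. First I would fix $T\in\T_h$ and abbreviate $q:=\nabla_w\cdot\bv\in P_{r_2}(T)$ with $r_2=2N+k-1$. The goal is to control the genuine divergence $\nabla\cdot\bv_0\in P_{k-1}(T)$, so I would test the defining identity \eqref{disdivnew} — valid because $\bv_0\in[P_k(T)]^d\subset[H^1(T)]^d$ is smooth on $T$ — against a carefully chosen $\phi\in P_{r_2}(T)$. The natural choice is $\phi=\Phi_B\,(\nabla\cdot\bv_0)$, where $\Phi_B\in P_{2N}(T)$ is the element bubble function from the Definition; since $\nabla\cdot\bv_0\in P_{k-1}(T)$, the product lies in $P_{2N+k-1}(T)=P_{r_2}(T)$, so this is an admissible test function, and crucially $\Phi_B=0$ on $\partial T$ kills the boundary term in \eqref{disdivnew}.

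With this choice \eqref{disdivnew} collapses to
\begin{equation*}
(\nabla_w\cdot\bv,\ \Phi_B\,\nabla\cdot\bv_0)_T=(\nabla\cdot\bv_0,\ \Phi_B\,\nabla\cdot\bv_0)_T.
\end{equation*}
The right-hand side is $\int_T \Phi_B\,(\nabla\cdot\bv_0)^2$, and because $\Phi_B\ge\rho_0>0$ on the subdomain $\hat T\subset T$ and $\nabla\cdot\bv_0$ is a polynomial of bounded degree, a domain-inverse (norm-equivalence) argument on the finite-dimensional space $P_{k-1}(T)$ together with shape-regularity gives $(\nabla\cdot\bv_0,\Phi_B\nabla\cdot\bv_0)_T\ge c\|\nabla\cdot\bv_0\|_T^2$ for a constant $c>0$ depending only on $k$, $N$, and the shape-regularity constants. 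For the left-hand side I would apply Cauchy–Schwarz, $0\le\Phi_B\le1$, and again the norm equivalence $\|\Phi_B\,\nabla\cdot\bv_0\|_T\le\|\nabla\cdot\bv_0\|_T$, yielding
\begin{equation*}
c\,\|\nabla\cdot\bv_0\|_T^2\le (\nabla_w\cdot\bv,\ \Phi_B\,\nabla\cdot\bv_0)_T\le \|\nabla_w\cdot\bv\|_T\,\|\nabla\cdot\bv_0\|_T,
\end{equation*}
and dividing through by $\|\nabla\cdot\bv_0\|_T$ (the estimate being trivial when this vanishes) gives $\|\nabla\cdot\bv_0\|_T\le C\|\nabla_w\cdot\bv\|_T$ with $C=1/c$.

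The main obstacle is the lower bound $(\nabla\cdot\bv_0,\Phi_B\nabla\cdot\bv_0)_T\ge c\|\nabla\cdot\bv_0\|_T^2$: one must argue that the bubble-weighted $L^2$ form is uniformly equivalent to the ordinary $L^2$ norm on the polynomial space $P_{k-1}(T)$, uniformly over all admissible (possibly non-convex) elements $T$ in the shape-regular family. This is handled by scaling to a reference configuration and invoking equivalence of norms on a finite-dimensional space, using that $\Phi_B$ is bounded below by $\rho_0$ on a subdomain $\hat T$ of commensurate size — exactly the property guaranteed in the Definition — so that the same reasoning as in the proof of Lemma \ref{norm1} applies verbatim with the matrix test function replaced by the scalar $\Phi_B\,\nabla\cdot\bv_0$. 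Everything else is routine, and the argument is in fact a simplification of the strain-tensor case since no symmetrization of the test function is needed.
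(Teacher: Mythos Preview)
Your argument is correct and is precisely the bubble-function approach of the cited reference \cite{wangelas} (the present paper omits the proof and defers to that source): test \eqref{disdivnew} with $\phi=\Phi_B\,\nabla\cdot\bv_0\in P_{r_2}(T)$, use $\Phi_B|_{\partial T}=0$ to drop the boundary term, and combine the lower bound from $\Phi_B\ge\rho_0$ on $\hat T$ with Cauchy--Schwarz. One small inaccuracy: the normalization $\Phi_B(M)=1$ does not guarantee $\Phi_B\le 1$ on all of $T$, since the barycenter need not be the maximizer; what you actually need, and what shape regularity provides, is a uniform bound $\|\Phi_B\|_{L^\infty(T)}\le C$, which still yields $\|\Phi_B\,\nabla\cdot\bv_0\|_T\le C\|\nabla\cdot\bv_0\|_T$ and the conclusion follows as you wrote.
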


\begin{remark}
If the polytopal element $T$ is convex, 
   the bubble function  in Lemma \ref{norm1}  simplifies to
 $$
 \Phi_B =l_1(x)l_2(x)\cdots l_N(x).
 $$ 
It can be shown that there exists a subdomain $\hat{T}\subset T$ such that
 $ \Phi_B\geq\rho_0$  for some constant $\rho_0>0$  and $\Phi_B=0$ on the boundary $\partial T$.  Using this simplified construction,   Lemmas \ref{norm1}-\ref{norm2}   can be established in the same way. In this case, we choose  $r_1=N+k-1$ and $r_2=N+k-1$.  
\end{remark}

\begin{definition} (Edge/face-based bubble function)
    Recall that $T$ is a $d$-dimensional polytopal element and  $e_i$ is a $(d-1)$-dimensional edge/face  of $T$.  Define the edge/face-based bubble function as   $$\varphi_{e_i}= \Pi_{k=1, \cdots, N, k\neq i}l_k^2(x).$$ This bubble function satisfies the following properties: (1) $\varphi_{e_i}=0$ on  $e_k$ for $k \neq i$, and (2) there exists a subdomain $\widehat{e_i}\subset e_i$ such that $\varphi_{e_i}\geq \rho_1$ for some constant $\rho_1>0$.  
\end{definition}

\begin{lemma}\cite{wangelas}\label{phi}
     For $\bv=\{\bv_0, \bv_b\}\in V_h$, define  $\bvarphi=(\bv_b-\bv_0) \bn^T\varphi_{e_i}$, where $\bn$ is the unit outward normal to the edge/face  $e_i$. Then, the following inequality holds:
\begin{equation}
  \|\bvarphi\|_T ^2 \leq Ch_T \int_{e_i}((\bv_b-\bv_0)\bn^T)^2ds.
\end{equation}
\end{lemma}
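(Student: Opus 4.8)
The plan is to bound the $L^2$-norm of $\bvarphi$ over $T$ by the square integral of $(\bv_b-\bv_0)\bn^T$ over the single face $e_i$, exploiting that the entries of $(\bv_b-\bv_0)|_{e_i}$ lie in a finite-dimensional polynomial space on $e_i$ and that $\varphi_{e_i}$ vanishes on all other faces. First I would observe that $\bv_0|_T \in [P_k(T)]^d$ and $\bv_b|_{e_i}\in S_k(e_i)$, so each scalar entry of the matrix-valued function $(\bv_b-\bv_0)\bn^T$, restricted to $e_i$, is a polynomial of bounded degree (at most $\max\{k,1\}$) on $e_i$; likewise $\bvarphi$ restricted to $T$ is a polynomial of degree at most $2N+k-1$ on the fixed element $T$. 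The key point is that the map $p \mapsto p\,\varphi_{e_i}$ (extending a polynomial on $e_i$ into $T$ and multiplying by the bubble) composed with restriction back to $e_i$ is a linear map between finite-dimensional spaces, and I intend to use a scaling/equivalence-of-norms argument on a reference configuration to transfer the estimate.

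The key steps, in order, are: (1) pass to a reference element $\hat T$ of unit size via an affine map, so that all constants become geometry-independent by shape-regularity, with the usual scalings $\|\bvarphi\|_T^2 \sim h_T^d \|\hat\bvarphi\|_{\hat T}^2$ and $\int_{e_i}(\cdot)^2\,ds \sim h_T^{d-1}\int_{\hat e_i}(\cdot)^2\,d\hat s$; (2) on $\hat T$, note that $\hat\bvarphi = \widehat{(\bv_b-\bv_0)\bn^T}\,\varphi_{\hat e_i}$ where the first factor, restricted to $\hat e_i$, is a polynomial of bounded degree; (3) since $\varphi_{\hat e_i}\ge \rho_1$ on the subdomain $\widehat{e_i}\subset \hat e_i$ and $\varphi_{\hat e_i}$ vanishes on every other face, the two quantities $\|\hat\bvarphi\|_{L^\infty(\hat T)}$ and $\|\widehat{(\bv_b-\bv_0)\bn^T}\|_{L^2(\hat e_i)}$ are two norms on the same finite-dimensional space (the space of polynomial data on $\hat e_i$ of degree at most $\max\{k,1\}$, seen through the injective extension map $p\mapsto p\,\varphi_{\hat e_i}$), hence equivalent; (4) bound $\|\hat\bvarphi\|_{L^2(\hat T)}\le |\hat T|^{1/2}\|\hat\bvarphi\|_{L^\infty(\hat T)} \le C\|\widehat{(\bv_b-\bv_0)\bn^T}\|_{L^2(\hat e_i)}$; (5) scale back, collecting the powers of $h_T$: $h_T^d$ on the left against $h_T^{d-1}$ on the right yields exactly the factor $h_T$ claimed.

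I expect the main obstacle to be making the finite-dimensionality argument in step (3) airtight, specifically verifying that the extension-then-multiply map $p \mapsto p\,\varphi_{\hat e_i}$ is injective on the relevant polynomial space on $\hat e_i$ (so that $\|\widehat{(\bv_b-\bv_0)\bn^T}\|_{L^2(\hat e_i)}$ is genuinely a norm on the image and the norm-equivalence constant is finite). This follows because $\varphi_{\hat e_i}>0$ on the interior of $e_i$ — indeed $\varphi_{\hat e_i}\ge\rho_1$ on $\widehat{e_i}$ — so $p\,\varphi_{\hat e_i}$ restricted to $\widehat{e_i}$ vanishes only if $p$ vanishes there, and a polynomial vanishing on the open set $\widehat{e_i}$ is identically zero. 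A secondary subtlety is dependence on the number of faces $N$: since the degree of $\varphi_{e_i}$ grows with $N$, strictly speaking the constant $C$ depends on $N$ (equivalently on a bound for the number of faces per element), which is already implicit in the shape-regularity assumption, so I would simply note this and absorb it into $C$.
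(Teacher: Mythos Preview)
The paper does not prove this lemma; it explicitly omits the proofs of Lemmas~4.2, 4.3, 4.5, 4.6, and 4.7 and refers to \cite{wangelas}, so there is no in-paper argument to compare against.

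Your outline has the right shape, but step~(1) conceals a real difficulty in precisely the polytopal setting this paper treats. You propose to ``pass to a reference element $\hat T$ of unit size via an affine map'', yet general polytopal elements---and especially non-convex ones---are not affinely equivalent to any fixed reference element. Even a pure dilation by $1/h_T$ leaves a unit-diameter element whose shape still varies across the mesh, so the finite-dimensional norm-equivalence constant you invoke in step~(3) a priori depends on that shape, not only on $k$ and $N$; the assertion that ``all constants become geometry-independent by shape-regularity'' is exactly the point that remains to be justified, and it is not the minor issue you flag at the end. A cleaner route, presumably closer to what \cite{wangelas} does, avoids any reference element: from $l_k(x)=\overrightarrow{AX}\cdot\bn_k/h_T$ one has $|l_k|\le 1$ on $T$, hence $\|\varphi_{e_i}\|_{L^\infty(T)}\le 1$ uniformly in the geometry, and therefore $\|\bvarphi\|_T^2\le\int_T|(\bv_b-\bv_0)\bn^T|^2\,dx$, where $(\bv_b-\bv_0)|_{e_i}$ is extended into $T$ constant along the normal to $e_i$. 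Integrating first along that normal direction over a slab of thickness at most $h_T$ produces the factor $h_T$ directly and reduces the estimate to a polynomial inverse inequality on the $(d-1)$-dimensional face $e_i$ versus a comparable-size projection of $T$, where the dependence on the shape-regularity parameters becomes explicit rather than being hidden in a nonexistent affine reference map.
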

  
\begin{lemma}\cite{wangelas}\label{phi2}
     For $\bv=\{\bv_0, \bv_b\}\in V_h$, define $\phi=(\bv_b-\bv_0) \cdot\bn \varphi_{e_i}$, where $\bn$ is the unit outward normal direction to the edge/face  $e_i$. Then, the following inequality holds: 
\begin{equation}
  \|\phi\|_T ^2 \leq Ch_T \int_{e_i}(\bv_b-\bv_0)^2ds.
\end{equation}
\end{lemma}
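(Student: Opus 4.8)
\textbf{Proof proposal for Lemma \ref{phi2}.}

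The plan is to reduce the bound on $\|\phi\|_T^2$ to a bound on the $L^2(T)$-norm of the scalar bubble-weighted polynomial $(\bv_b-\bv_0)\cdot\bn\,\varphi_{e_i}$ and then transfer it to an integral over $e_i$ by exploiting the fact that $\varphi_{e_i}$ is a product of squared linear factors that does not vanish identically on $e_i$. First I would fix the edge/face $e_i\subset\partial T$ and observe that on $e_i$ all the factors $l_k^2(x)$ with $k\neq i$ are well-defined nonnegative polynomials, so $\varphi_{e_i}|_{e_i}$ is a nonnegative polynomial of fixed degree; by the definition of the edge/face bubble function there is a subdomain $\widehat{e_i}\subset e_i$ on which $\varphi_{e_i}\geq\rho_1>0$. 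The quantity $(\bv_b-\bv_0)\cdot\bn$ restricted to $e_i$ is (the trace of) a vector polynomial dotted into the fixed unit normal $\bn$, hence a scalar polynomial of degree depending only on $k$ and $N$; call it $q$. Thus $\phi|_{e_i}=q\,\varphi_{e_i}|_{e_i}$ lives in a finite-dimensional polynomial space on $e_i$ whose dimension depends only on $k$, $N$, and $d$.

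The core of the argument is a norm-equivalence / scaling step. On a reference configuration, the map $q\mapsto \|q\,\varphi_{e_i}\|_{L^2(T)}^2$ and the map $q\mapsto \int_{e_i} q^2\,ds$ are both quadratic forms on the same finite-dimensional space of polynomials $q$; the second is a genuine norm on that space because $q$ is a polynomial that is not forced to vanish on $e_i$, and the first is bounded above by a constant times $\|q\|_{L^\infty}^2$ times $|T|$. Since all norms on a finite-dimensional space are equivalent, on the reference element one gets $\|q\,\varphi_{e_i}\|_{L^2(T)}^2 \leq C\int_{e_i} q^2\,ds$ with $C$ depending only on $k,N,d$. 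I would then undo the scaling: under the usual shape-regularity assumption on $\T_h$, the affine (or piecewise-affine, for polytopes) scaling from the reference configuration to $T$ of diameter $h_T$ introduces a factor $h_T^{d}$ on the volume integral and $h_T^{d-1}$ on the surface integral, producing the stated factor $h_T = h_T^{d}/h_T^{d-1}$. Recalling $\phi = q\,\varphi_{e_i}$ with $q=(\bv_b-\bv_0)\cdot\bn$ then yields $\|\phi\|_T^2 \leq C h_T\int_{e_i}(\bv_b-\bv_0)^2\,ds$, where I have used $((\bv_b-\bv_0)\cdot\bn)^2\leq |\bv_b-\bv_0|^2$ to replace $q^2$ on $e_i$ by $(\bv_b-\bv_0)^2$.

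The main obstacle is making the finite-dimensionality / norm-equivalence argument uniform over the family of polytopal elements: a general (possibly non-convex) polytope $T$ does not reduce to a single reference element by an affine map, so one must argue either via the shape-regularity decomposition of $T$ into a bounded number of shape-regular simplices (as in \cite{wy3655}), applying the reference-simplex norm equivalence on the piece of $T$ that contains $\widehat{e_i}$ and $e_i$, or by a compactness argument over the (normalized) shape-regular family. Care is also needed that the positivity set $\widehat{e_i}$ has measure bounded below in terms of $|e_i|$ uniformly, which again follows from shape-regularity; this is exactly the ingredient that keeps $\int_{e_i} q^2\,ds$ a uniformly nondegenerate norm. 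Everything else — integration estimates, the bound $\varphi_{e_i}\leq C$, and the scaling powers of $h_T$ — is routine. Since Lemma \ref{phi} is the matrix-valued analogue proved by the identical mechanism (with $(\bv_b-\bv_0)\bn^T$ in place of $(\bv_b-\bv_0)\cdot\bn$), I would present the two proofs in parallel.
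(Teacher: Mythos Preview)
The paper does not supply its own proof of Lemma~\ref{phi2}: the statement is quoted from \cite{wangelas}, and the surrounding text explicitly says the proofs of Lemmas~4.2--4.7 are omitted. So there is no in-paper argument to compare yours against; your proposal has to be judged on its own merits and against what one expects the cited proof to be.

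Your overall strategy---finite-dimensional norm equivalence on a reference configuration plus an $h_T^{d}/h_T^{d-1}=h_T$ scaling---is the standard and correct route, and you correctly flag the one genuinely delicate point, namely making the equivalence constants uniform over a shape-regular family of possibly non-convex polytopes. One thing you should make explicit, however, is how $q=(\bv_b-\bv_0)\cdot\bn$ is extended from $e_i$ to $T$ before you form $\|q\,\varphi_{e_i}\|_{L^2(T)}$. If you treat $q$ as an arbitrary polynomial on $T$ and merely restrict it to $e_i$ for the right-hand side, your norm-equivalence step fails: there are polynomials on $T$ that vanish on $e_i$ but not on $T$, so $\int_{e_i}q^2\,ds$ is not a norm on that space. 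The intended construction (and the one that makes the argument go through) is to take the trace $(\bv_b-\bv_0)|_{e_i}\cdot\bn$ as a polynomial in the tangential variables of $e_i$ and extend it to $T$ constantly in the normal direction; then $q\equiv 0$ on $e_i$ forces $q\equiv 0$ on $T$, both quadratic forms become genuine norms on the same finite-dimensional space, and the equivalence plus scaling yields the stated bound. With that clarification your sketch is complete, and your closing remark that Lemma~\ref{phi} is the matrix-valued analogue handled identically is exactly right.
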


\begin{lemma}\cite{wangelas}\label{normeqva}   There exists  positive constants $C_1$ and $C_2$ such that,  for any $\bv=\{\bv_0, \bv_b\} \in V_h$,  
 \begin{equation}\label{normeq}
 C_1\|\bv\|_{1, h}\leq \3bar \bv\3bar  \leq C_2\|\bv\|_{1, h}.
\end{equation}
\end{lemma}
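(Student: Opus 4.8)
The plan is to establish the two inequalities separately, relying on Lemmas \ref{norm1}--\ref{phi2} for the parts that involve the bubble-function machinery. Recall that, by definition,
$\3bar\bv\3bar^2 = \sum_T (2\mu\,\epsilon_w(\bv),\epsilon_w(\bv))_T + (\lambda\nabla_w\cdot\bv,\nabla_w\cdot\bv)_T$, while
$\|\bv\|_{1,h}^2 = \sum_T (2\mu\,\epsilon(\bv_0),\epsilon(\bv_0))_T + (\lambda\nabla\cdot\bv_0,\nabla\cdot\bv_0)_T + h_T^{-1}\|\bv_0-\bv_b\|_{\partial T}^2$.
Since $\lambda/\mu$ is bounded (and bounded below, as both Lamé parameters are positive and piecewise smooth), the weights $2\mu$ and $\lambda$ only contribute fixed constants, so up to constants it suffices to compare $\sum_T \|\epsilon_w(\bv)\|_T^2 + \|\nabla_w\cdot\bv\|_T^2$ with $\sum_T \|\epsilon(\bv_0)\|_T^2 + \|\nabla\cdot\bv_0\|_T^2 + h_T^{-1}\|\bv_0-\bv_b\|_{\partial T}^2$.

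\textbf{Lower bound} $C_1\|\bv\|_{1,h}\le \3bar\bv\3bar$. Here I would invoke Lemma \ref{norm1} and Lemma \ref{norm2} directly to control $\|\epsilon(\bv_0)\|_T$ and $\|\nabla\cdot\bv_0\|_T$ by $\|\epsilon_w(\bv)\|_T$ and $\|\nabla_w\cdot\bv\|_T$. The remaining task is to bound the boundary term $h_T^{-1}\|\bv_0-\bv_b\|_{\partial T}^2$. For this, fix an edge/face $e_i\subset\partial T$ and test identity \eqref{2.5new} with $\bvarphi = (\bv_b-\bv_0)\bn^T\varphi_{e_i}$ as in Lemma \ref{phi}; because $\varphi_{e_i}$ vanishes on all other edges/faces, the boundary term collapses to an integral over $e_i$ of a quantity comparable to $(\bv_b-\bv_0)^2$ on $\widehat{e_i}$ (using the pointwise lower bound $\varphi_{e_i}\ge\rho_1$ there and a norm-equivalence on the finite-dimensional polynomial trace space), while the right-hand side $(\epsilon(\bv_0),\bvarphi)_T - (\epsilon_w(\bv),\bvarphi)_T$ is bounded by $(\|\epsilon(\bv_0)\|_T + \|\epsilon_w(\bv)\|_T)\|\bvarphi\|_T$; then Lemma \ref{phi} gives $\|\bvarphi\|_T^2 \le Ch_T\int_{e_i}((\bv_b-\bv_0)\bn^T)^2\,ds$. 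Rearranging and combining with the first estimate yields $h_T^{-1}\|\bv_0-\bv_b\|_{e_i}^2 \le C\|\epsilon_w(\bv)\|_T^2$; summing over the $O(1)$ edges/faces of $T$ and over $T\in\T_h$ completes this direction. (One subtlety: $\bv_b$ lies in $S_k(e)$, and the trace of $\bv_0$ and of polynomial test functions may need to be handled componentwise and via the scalar variant in Lemma \ref{phi2} for the divergence-type contribution if one wants a fully symmetric treatment; but the strain-tensor estimate alone already dominates $h_T^{-1}\|\bv_0-\bv_b\|_{\partial T}^2$.)

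\textbf{Upper bound} $\3bar\bv\3bar \le C_2\|\bv\|_{1,h}$. This is the more routine direction. Starting from \eqref{2.5new}, for any $\bvarphi\in[P_{r_1}(T)]^{d\times d}$ write
$(\epsilon_w(\bv),\bvarphi)_T = (\epsilon(\bv_0),\bvarphi)_T + \langle \bv_b-\bv_0,\tfrac12(\bvarphi+\bvarphi^T)\cdot\bn\rangle_{\partial T}$,
bound the boundary pairing by $\|\bv_0-\bv_b\|_{\partial T}\|\bvarphi\|_{\partial T}$, apply the polynomial trace inequality \eqref{trace} to get $\|\bvarphi\|_{\partial T}\le Ch_T^{-1/2}\|\bvarphi\|_T$, and choose $\bvarphi = \epsilon_w(\bv)$; this gives $\|\epsilon_w(\bv)\|_T \le C(\|\epsilon(\bv_0)\|_T + h_T^{-1/2}\|\bv_0-\bv_b\|_{\partial T})$. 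An identical argument starting from \eqref{disdivnew} and testing with $\phi = \nabla_w\cdot\bv$ gives $\|\nabla_w\cdot\bv\|_T \le C(\|\nabla\cdot\bv_0\|_T + h_T^{-1/2}\|\bv_0-\bv_b\|_{\partial T})$. Squaring, multiplying by the (bounded) weights $2\mu,\lambda$, and summing over $T$ yields $\3bar\bv\3bar \le C_2\|\bv\|_{1,h}$.

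\textbf{Main obstacle.} The delicate point is the lower bound on the jump term $h_T^{-1}\|\bv_0-\bv_b\|_{\partial T}^2$: one must choose the test function $\bvarphi$ (resp.\ $\phi$) supported essentially on a single edge/face via the edge bubble $\varphi_{e_i}$, exploit the interior sub-region lower bound $\varphi_{e_i}\ge\rho_1$ on $\widehat{e_i}$ together with equivalence of norms on the finite-dimensional space of polynomial traces to pass from $\int_{\widehat{e_i}}(\bv_b-\bv_0)^2\varphi_{e_i}\,ds$ back to $\|\bv_b-\bv_0\|_{e_i}^2$, and only then absorb the $\|\bvarphi\|_T$ factor using Lemma \ref{phi}/\ref{phi2}. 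This is exactly where non-convexity of $T$ is accommodated (through the squared linear factors $l_k^2$ in the bubble), and it is the step where the choice $r_1,r_2 = 2N+k-1$ is needed so that $\bvarphi,\phi$ of the required degree are admissible test functions in \eqref{2.5}--\eqref{disdiv}. Everything else is Cauchy--Schwarz, trace inequalities, and the boundedness of $\lambda/\mu$.
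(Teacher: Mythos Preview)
Your proposal is correct and follows exactly the route the paper intends: the paper omits the proof (citing \cite{wangelas}) but sets up precisely the ingredients you use---Lemmas~\ref{norm1}--\ref{norm2} for the volume terms, the edge-bubble test functions of Lemmas~\ref{phi}--\ref{phi2} together with \eqref{2.5new}/\eqref{disdivnew} for the jump term in the lower bound, and \eqref{2.5new}/\eqref{disdivnew} with the trace inequality \eqref{trace} for the upper bound. Your identification of the ``main obstacle'' (localizing to a single face via $\varphi_{e_i}$, the degree requirement $r_1,r_2=2N+k-1$, and the sub-face lower bound $\varphi_{e_i}\ge\rho_1$ on $\widehat{e_i}$) is exactly the non-trivial content of the argument.
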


  \begin{remark}
   If the polytopal element $T$ is convex, 
  the edge/face-based bubble function in Lemmas \ref{phi}-\ref{normeqva}  simplifies to
$$\varphi_{e_i}= \Pi_{k=1, \cdots, N, k\neq i}l_k(x).$$
It can be shown that (1)  $\varphi_{e_i}=0$ on   $e_k$ for $k \neq i$, and (2) there exists a subdomain $\widehat{e_i}\subset e_i$ such that $\varphi_{e_i}\geq \rho_1$ for some constant $\rho_1>0$.  Using this simplified construction,  Lemmas \ref{phi}-\ref{normeqva}  can be derived in the same manner.
\end{remark}

\begin{remark}
For any  $d$-dimensional polytopal element $T$,  
 there exists a hyperplane $H\subset R^d$  such that a finite number $l$ of distinct $(d-1)$-dimensional edges/faces containing $e_{i}$ are contained  in $H$.  Under this setting, Lemmas \ref{phi}-\ref{normeqva} can be proved with additional techniques. For more details, see \cite{wang1, wang2, wangelas}, which provide a generalization applicable to Lemmas  \ref{phi}-\ref{normeqva}.
 \end{remark}

\begin{lemma}\cite{wg10, wg18} (Second Korn's Inequality)
Let $\Omega$  be a connected, open, bounded domain with a Lipschitz continuous boundary. Assume $\Gamma_1\subset \partial\Omega$ is a nontrivial portion of $\partial\Omega$  with dimension $d-1$. For any fixed real number $1\leq p<\infty$, there exists a constant $C$ such that
\begin{equation}\label{korn}
 \|\bv\|_1\leq C(\|\epsilon(\bv)\|_0+\|\bv\|_{L^p(\Gamma_1)}), 
\end{equation}
for any $\bv\in [H^1(\Omega)]^d$.
\end{lemma}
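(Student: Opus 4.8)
The plan is to deduce \eqref{korn} from the classical ``second Korn inequality without boundary conditions'' by means of a Peetre--Tartar type compactness argument. As a first step I would establish (or recall) the auxiliary estimate
$$
\|\bv\|_1\ \le\ C_0\big(\|\bv\|_0+\|\epsilon(\bv)\|_0\big),\qquad\forall\,\bv\in[H^1(\Omega)]^d .
$$
This rests on the pointwise distributional identity $\partial_k\partial_j v_i=\partial_j\epsilon_{ik}(\bv)+\partial_k\epsilon_{ij}(\bv)-\partial_i\epsilon_{jk}(\bv)$ together with the Lions--Ne\v{c}as lemma: on a bounded domain with Lipschitz boundary, a distribution whose value and all first derivatives belong to $H^{-1}(\Omega)$ in fact lies in $L^2(\Omega)$, with a matching norm bound. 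Applying this to $u=\partial_j v_i$ --- which lies in $H^{-1}$ since $v_i\in L^2$, and whose first derivatives lie in $H^{-1}$ by the identity above because $\epsilon(\bv)\in L^2$ --- shows $\partial_j v_i\in L^2$ and yields the stated bound.

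Next I would argue by contradiction. If \eqref{korn} were false, there would be a sequence $\{\bv_n\}\subset[H^1(\Omega)]^d$ with $\|\bv_n\|_1=1$ and $\|\epsilon(\bv_n)\|_0+\|\bv_n\|_{L^p(\Gamma_1)}\to0$. By the Rellich--Kondrachov theorem a subsequence (still denoted $\bv_n$) converges in $[L^2(\Omega)]^d$; applying the auxiliary estimate to $\bv_n-\bv_m$ and using $\epsilon(\bv_n)\to0$ shows that $\{\bv_n\}$ is Cauchy in $[H^1(\Omega)]^d$, hence $\bv_n\to\bv$ in $[H^1(\Omega)]^d$ with $\|\bv\|_1=1$, $\epsilon(\bv)=0$, and, by continuity of the trace, $\bv|_{\Gamma_1}=0$. (For the exponents $p$ for which the trace is continuous into $[L^p(\Gamma_1)]^d$ this is immediate; for larger $p$ the right-hand side of \eqref{korn} is finite only on a smaller class and one reduces to, say, $p=2$ via H\"older on the finite-measure set $\Gamma_1$.) The identity $\partial_k\partial_j v_i=0$ forces each $v_i$ to be affine, and since $\Omega$ is connected $\epsilon(\bv)=0$ then gives that $\bv$ is a rigid motion, $\bv(\bx)=\ba+\eta\bx$ with $\ba\in\mathbb R^d$ and $\eta\in so(d)$.

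It remains to see that a rigid motion vanishing on the $(d-1)$-dimensional set $\Gamma_1$ is identically zero. Fixing $\bx_0\in\Gamma_1$ and subtracting, $\eta(\bx-\bx_0)=0$ for all $\bx\in\Gamma_1$, so $\eta$ annihilates the span of $\{\bx-\bx_0:\bx\in\Gamma_1\}$, which (since $\Gamma_1$ has positive $(d-1)$-measure) has dimension at least $d-1$; a skew-symmetric matrix vanishing on a hyperplane must vanish, for if $\eta w=0$ whenever $w\perp\bn$, then $\langle\eta\bn,w\rangle=-\langle\bn,\eta w\rangle=0$ for all such $w$ and $\langle\eta\bn,\bn\rangle=0$, whence $\eta\bn=0$. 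Hence $\eta=0$ and then $\ba=-\eta\bx_0=0$, so $\bv\equiv0$, contradicting $\|\bv\|_1=1$. This proves \eqref{korn}.

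The step I expect to be the real obstacle is the auxiliary estimate $\|\bv\|_1\le C_0(\|\bv\|_0+\|\epsilon(\bv)\|_0)$: the Lions--Ne\v{c}as lemma behind it is delicate and genuinely exploits the Lipschitz regularity of $\partial\Omega$ (it can fail on irregular domains), so a self-contained treatment must handle it with care, whereas in the present context it may simply be cited. Beyond that, the only point needing attention is matching the exponent $p$ with the Sobolev embedding of $H^{1/2}(\Gamma_1)$ on the $(d-1)$-dimensional piece $\Gamma_1$, which, as noted, causes no difficulty for the range of $p$ relevant to the error analysis.
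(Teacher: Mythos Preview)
The paper does not prove this lemma at all: it is stated with a citation to \cite{wg10, wg18} and used as a black box in the uniqueness argument, with no accompanying proof. So there is nothing in the paper to compare your argument against.

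That said, your outline is the standard and correct route to this result: Korn's inequality without boundary conditions (via the distributional identity for second derivatives and the Lions--Ne\v{c}as / negative-norm lemma), followed by a Peetre--Tartar compactness argument to upgrade the $\|\bv\|_0$ term to $\|\bv\|_{L^p(\Gamma_1)}$, and finally the observation that a rigid motion vanishing on a set of positive $(d-1)$-measure must vanish identically. Your handling of the last step is clean. The only places that would need tightening in a full write-up are the ones you already flagged: the Lions--Ne\v{c}as lemma genuinely requires the Lipschitz hypothesis and is the nontrivial analytic input, and the trace continuity into $L^p(\Gamma_1)$ needs the obvious restriction on $p$ (or the H\"older reduction you mention). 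For the purposes of this paper, simply citing the result as the authors do is entirely appropriate.
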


\begin{theorem}
The auto-stabilized weak Galerkin finite element scheme \eqref{PDWG} has a unique solution.
\end{theorem}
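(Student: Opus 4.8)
The plan is to reduce the existence–uniqueness question for the linear system \eqref{WG} to the verification that the associated bilinear form $a(\cdot,\cdot)$ is positive definite on $V_h^0$. Since \eqref{WG} is a square linear system (the number of unknowns equals the number of test functions in $V_h^0$ after the boundary and interface data are fixed), it suffices to show that the homogeneous problem—with $\bf=0$, $\bpsi_m=0$, $\bg_i=0$, $\bphi_m=0$—admits only the trivial solution. So I would take $\bu\in V_h^0$ with $\bu_b=0$ on $\partial\Omega$ and $\bu_b^L-\bu_b^R=0$ across each $\Gamma_m$, set $\bv=\bu$ in \eqref{WG}, and obtain $a(\bu,\bu)=0$, i.e. $\3bar\bu\3bar=0$ by the definition \eqref{3norm}.

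Next I would use the norm equivalence of Lemma~\ref{normeqva}: $\3bar\bu\3bar=0$ forces $\|\bu\|_{1,h}=0$, which by \eqref{disnorm} gives $\epsilon(\bu_0)=0$ on every $T$, $\nabla\cdot\bu_0=0$ on every $T$, and $\bu_0=\bu_b$ on every $\partial T$. The condition $\epsilon(\bu_0)=0$ on each element means $\bu_0|_T\in RM(T)$ is a rigid motion. The matching condition $\bu_0=\bu_b$ on all interior edges/faces then forces these element-wise rigid motions to agree across interior edges (through the single-valued $\bu_b$), so $\bu_0$ is a globally continuous function that is piecewise a rigid motion; on a connected patch of $\Omega$ not separated by $\Gamma$ it is therefore a single global rigid motion. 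The vanishing boundary data $\bu_b=0$ on the portion of $\partial\Omega$ bordering such a patch then pins that rigid motion down: applying the second Korn inequality \eqref{korn} on that subdomain with $\Gamma_1$ the relevant boundary piece (or simply observing a rigid motion vanishing on a set of positive $(d-1)$-measure is identically zero) yields $\bu_0\equiv 0$ there, hence $\bu_b\equiv0$ there as well. Finally, the interface continuity $\bu_b^L=\bu_b^R$ propagates this vanishing across each $\Gamma_m$ to the neighboring subdomains, so $\bu_0\equiv 0$ and $\bu_b\equiv 0$ throughout, i.e. $\bu=0$ in $V_h$.

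I would organize the write-up as: (i) reduce to the homogeneous system; (ii) derive $\3bar\bu\3bar=0$ and invoke Lemma~\ref{normeqva} to get $\|\bu\|_{1,h}=0$; (iii) extract $\epsilon(\bu_0)|_T=0$, $\bu_0|_{\partial T}=\bu_b$, so $\bu_0$ is piecewise rigid and globally continuous; (iv) use the boundary condition together with Korn's inequality \eqref{korn} to conclude $\bu_0=0$ on the boundary-adjacent subdomains and then spread through the interfaces via $\bu_b^L=\bu_b^R$. The main obstacle I anticipate is step (iii)–(iv): carefully arguing that the element-wise rigid motions glue into a global rigid motion on each connected component of $\Omega\setminus\Gamma$ (using that $\bu_b$ is single-valued on interior edges and $S_k(e)\supset P_{RM}(e)$, so a rigid motion restricted to an edge is not annihilated), and then that the connectivity of the subdomain structure together with the interface matching lets the zero value propagate from $\partial\Omega$ to all of $\Omega$. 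Everything else is the standard "coercivity $\Rightarrow$ unique solvability for a square system" argument.
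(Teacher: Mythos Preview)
Your proposal is correct and follows essentially the same route as the paper: reduce the square system to uniqueness, take the difference of two solutions (equivalently, the homogeneous problem), obtain $\3bar\bw\3bar=0$, invoke the norm equivalence of Lemma~\ref{normeqva} to get $\|\bw\|_{1,h}=0$, deduce that $\bw_0$ is a piecewise rigid motion with $\bw_0=\bw_b$ on each $\partial T$, hence globally continuous with zero boundary trace, and finish with the second Korn inequality \eqref{korn}. The only cosmetic difference is that the paper, after noting $\bw_b$ is single-valued across every interior edge (including the interface, since the jump data cancel in the difference), applies Korn once on all of $\Omega$ rather than propagating subdomain by subdomain as you outline; your more cautious step-by-step argument is not needed but is certainly not wrong.
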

\begin{proof}
Since the number of equations matches the number of unknowns in \eqref{WG}, it suffices to prove uniqueness.  Let $\bu_h^{(1)}=\{\bu_0^{(1)}, \bu_b^{(1)}\}$ and $\bu_h^{(2)}=\{\bu_0^{(2)}, \bu_b^{(2)}\}\in V_h$ be two solutions of \eqref{WG}. Then, $\bu_b^{(j)} (j=1, 2)$ satisfies $\bu_b^{(j)}=\bQ_b\bg_i$ on $\partial\Omega_i\cap\partial\Omega$ for $i=1, \cdots, N$, $\bu_b^{(j), L}-\bu_b^{(j), R}=\bQ_b\bphi_m$ on $\Gamma_m$ for $m=1,\cdots, M$, and
\begin{equation}\label{tt1}
 a(\bu^{(j)}, \bv)=(\bf, \bv_0)+\sum_{m=1}^M\langle \bpsi_m, \bv_b \rangle_{\Gamma_m},  \qquad \forall \bv=\{\bv_0, \bv_b\}\in V_h^0, j=1, 2.
\end{equation} 
The difference   $\bw=\bu_h^{(1)}-\bu_h^{(2)} \in V_h^0$ satisfies 
\begin{equation}\label{tt2}
 a(\bw, \bv)=0, \qquad \forall \bv=\{\bv_0, \bv_b\} \in V_h^0.
\end{equation} 
Choosing  $\bv=\bw$ in \eqref{tt2} and using the norm equivalence \eqref{normeq}, we deduce that  $\|\bw\|_{1, h}=0$. This implies:
\begin{eqnarray}\label{c1}
\epsilon (\bw_0)&=&0, \qquad \text{in each}\ T,\\ 
\nabla \cdot\bw_0&=&0, \qquad \text{in each}\ T,\label{c2}\\
 \bw_0&=&\bw_b,  \qquad \text{on each}\ \partial T.\label{c3}
\end{eqnarray}

Since $\epsilon(\bw_0)=0$ on each element  $T$, it follows that $\bw_0\in RM(T)\subset [P_1(T)]^d$. Thus, $\bw_0=\bw_b$ on each $\partial T$,  and $\bw_0$ is continuous over $\Omega$. Given that  $\bw_b=0$ on $\partial\Omega$, we conclude $\bw_0=0$ on $\partial\Omega$. Applying the second Korn's inequality \eqref{korn}, we conclude $\bw_0\equiv 0$ in $\Omega$. Since   $\bw_0=\bw_b$ on each $\partial T$,  it follows that $\bw_b\equiv 0$ in $\Omega$. Hence, $\bu_h^{(1)}=\bu_h^{(2)}$. 

 This completes the proof of the theorem. 
\end{proof}

\section{Error Equations}
On each element $T\in\T_h$, let $Q_0$ denote the $L^2$ projection onto $P_k(T)$. On each  edge or face  $e\subset\partial T$, recall that $Q_b$ is the $L^2$ projection operator onto $S_k(e)$. For any $\bw\in [H^1(\Omega)]^d$, we define the $L^2$ projection into the weak finite element space $V_h$, denote by $Q_h \bw$, such that
$$
(Q_h\bw)|_T:=\{Q_0(\bw|_T),Q_b(\bw|_{\pT})\},\qquad \forall T\in\T_h.
$$
We also denote $Q_{r_1}$ and $Q_{r_2}$ as the $L^2$ projection operators onto the finite element spaces of piecewise polynomials of degrees $r_1$ and $r_2$,  respectively.

\begin{lemma}\cite{wangelas} \label{pror1}The following properties hold:
\begin{equation}\label{pro}
\epsilon_{w}(\bw) =Q_{r_1}\epsilon (\bw), \qquad \forall \bw\in [H^1(T)]^d,
\end{equation}
\begin{equation}\label{pro2}
\nabla_w\cdot \bw  =Q_{r_2}(\nabla \cdot  \bw), \qquad \forall \bw\in [H^1(T)]^d.
\end{equation}
\end{lemma}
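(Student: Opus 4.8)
The plan is to prove the two identities in Lemma \ref{pror1} directly from the definitions of the discrete weak operators, exploiting the fact that for smooth $\bw$ the boundary mismatch term vanishes because $\bw_b=\bw_0=\bw|_{\partial T}$ in the special case $\bv=\bw$.

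\textbf{Proof of \eqref{pro}.} Fix $T\in\T_h$ and $\bw\in[H^1(T)]^d$, viewed as a weak function $\bw=\{\bw|_T,\bw|_{\partial T}\}$ so that $\bw_b-\bw_0=0$ on $\partial T$. Using the smooth-function form \eqref{2.5new} of the discrete weak strain tensor, we get, for every $\bvarphi\in[P_{r_1}(T)]^{d\times d}$,
\begin{equation*}
(\epsilon_{w,r_1,T}(\bw),\bvarphi)_T=(\epsilon(\bw),\bvarphi)_T+\langle \bw_b-\bw_0,\tfrac12(\bvarphi+\bvarphi^T)\cdot\bn\rangle_{\partial T}=(\epsilon(\bw),\bvarphi)_T.
\end{equation*}
On the other hand, by definition of the $L^2$ projection $Q_{r_1}$ onto $[P_{r_1}(T)]^{d\times d}$, $(Q_{r_1}\epsilon(\bw),\bvarphi)_T=(\epsilon(\bw),\bvarphi)_T$ for all $\bvarphi\in[P_{r_1}(T)]^{d\times d}$. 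Subtracting, $(\epsilon_{w,r_1,T}(\bw)-Q_{r_1}\epsilon(\bw),\bvarphi)_T=0$ for all $\bvarphi$ in that polynomial space; since $\epsilon_{w,r_1,T}(\bw)-Q_{r_1}\epsilon(\bw)$ is itself a symmetric matrix polynomial in $[P_{r_1}(T)]^{d\times d}$, we may take $\bvarphi$ equal to it and conclude it is zero. Assembling over all $T\in\T_h$ gives \eqref{pro}. One subtlety I would flag: the test space in \eqref{2.5} runs over all of $[P_{r_1}(T)]^{d\times d}$, not just symmetric matrices, and $\epsilon_{w,r_1,T}(\bw)$ is only characterized as symmetric; so the final "take $\bvarphi$ to be the difference'' step should note that a symmetric polynomial matrix is an admissible test function, which it is.

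\textbf{Proof of \eqref{pro2}.} The argument is the scalar-divergence analogue. For $\bw\in[H^1(T)]^d$ viewed as $\{\bw|_T,\bw|_{\partial T}\}$, the smooth form \eqref{disdivnew} gives, for all $\phi\in P_{r_2}(T)$,
\begin{equation*}
(\nabla_{w,r_2,T}\cdot\bw,\phi)_T=(\nabla\cdot\bw,\phi)_T+\langle(\bw_b-\bw_0)\cdot\bn,\phi\rangle_{\partial T}=(\nabla\cdot\bw,\phi)_T=(Q_{r_2}(\nabla\cdot\bw),\phi)_T,
\end{equation*}
using $\bw_b-\bw_0=0$ on $\partial T$ and the definition of $Q_{r_2}$. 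Hence $\nabla_{w,r_2,T}\cdot\bw-Q_{r_2}(\nabla\cdot\bw)\in P_{r_2}(T)$ is $L^2(T)$-orthogonal to all of $P_{r_2}(T)$, so it vanishes; assembling over $T\in\T_h$ yields \eqref{pro2}.

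There is no serious obstacle here — the whole point is that \eqref{2.5new} and \eqref{disdivnew} were set up precisely so that the boundary terms disappear when the weak function is the restriction of a genuine $H^1$ function, after which the result is just uniqueness of the $L^2$ projection. The only thing to be careful about is the well-posedness of \eqref{2.5new}: it requires $\bw_0\in[H^1(T)]^d$ so that integration by parts is legitimate, which is exactly the hypothesis $\bw\in[H^1(T)]^d$; and, as noted, the passage from "orthogonal to all test functions'' to "identically zero'' uses that the difference lies in the relevant polynomial test space. I would write both parts in parallel and keep the exposition to a few lines, since the content is entirely a bookkeeping consequence of the definitions in Section~\ref{Section:Hessian}.
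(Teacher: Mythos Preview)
Your proof is correct and is exactly the standard argument: once $\bw$ is identified with the weak function $\{\bw|_T,\bw|_{\partial T}\}$, the boundary terms in \eqref{2.5new} and \eqref{disdivnew} vanish and the discrete weak operators coincide with the $L^2$ projections of the classical ones by uniqueness. The paper itself does not supply a proof of this lemma but merely cites \cite{wangelas}; your write-up is precisely the canonical proof one finds there, so there is nothing further to compare. (Your flagged ``subtlety'' about symmetry is a non-issue: both $\epsilon_{w,r_1,T}(\bw)$ and $Q_{r_1}\epsilon(\bw)$ lie in the full test space $[P_{r_1}(T)]^{d\times d}$, so testing against their difference is immediate.)
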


Let  $\bu$ denote the exact solution of the elasticity interface problem \eqref{model}, and let $\bu_h \in V_{h}$ represent its numerical approximation obtained from the auto-stabilized WG Algorithm \ref{PDWG}. The error function $\be_h$ is defined as:
\begin{equation}\label{error} 
\be_h=\bu-\bu_h  \in V_{h}^0.
\end{equation}

\begin{lemma}\label{errorequa}
The error function $\be_h$ defined in  \eqref{error}  satisfies the following error equation:
\begin{equation}\label{erroreqn}
\sum_{T\in {\cal T}_h}(2\mu\epsilon_w (\be_h), \epsilon_w (\bv))_T+(\lambda \nabla_w \cdot \be_h, \nabla_w \cdot \bv)_T =\ell (\bu, \bv), \qquad \forall \bv\in V_h^0,
\end{equation}
where 
$$
\ell (\bu, \bv)=\sum_{T\in {\cal T}_h}  \langle   \bv_b-\bv_0, 2\mu( Q_{r_1}-I)  \epsilon (\bu) \cdot\bn\rangle_{\partial T} +\langle (\bv_b-\bv_0)\cdot\bn, \lambda   (Q_{r_2} -I)(\nabla  \cdot \bu)\rangle_{\partial T}.
$$
\end{lemma}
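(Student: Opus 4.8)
The plan is to derive the error equation by testing the continuous problem against a weak test function and comparing with the discrete scheme \eqref{WG}. First I would start from the strong form: on each $T\in\T_h$, apply the definition of the discrete weak strain tensor \eqref{2.5new} to the exact solution $\bu$ (which is smooth enough on each $\Omega_i$), obtaining
\begin{equation*}
(2\mu\epsilon_w(\bu),\epsilon_w(\bv))_T = (2\mu\,\epsilon_w(\bu),\epsilon(\bv_0))_T + \langle \bv_b-\bv_0,\, 2\mu\,\epsilon_w(\bu)\cdot\bn\rangle_{\partial T},
\end{equation*}
and similarly for the divergence term using \eqref{disdivnew}. Here I would use the commutativity property from Lemma \ref{pror1}, namely $\epsilon_w(\bu)=Q_{r_1}\epsilon(\bu)$ and $\nabla_w\cdot\bu = Q_{r_2}(\nabla\cdot\bu)$, to replace $\epsilon_w(\bu)$ by $Q_{r_1}\epsilon(\bu)$ and split $Q_{r_1}\epsilon(\bu) = \epsilon(\bu) + (Q_{r_1}-I)\epsilon(\bu)$ inside the boundary terms.

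Next I would observe that $(Q_{r_1}\epsilon(\bu),\epsilon(\bv_0))_T = (\epsilon(\bu),\epsilon(\bv_0))_T$ since $\epsilon(\bv_0)$ is a polynomial of degree $\le k-1 \le r_1$ (and similarly $(Q_{r_2}(\nabla\cdot\bu),\nabla\cdot\bv_0)_T=(\nabla\cdot\bu,\nabla\cdot\bv_0)_T$). Summing over all $T$, the volume contributions assemble into $\sum_T (2\mu\,\epsilon(\bu),\epsilon(\bv_0))_T + (\lambda\nabla\cdot\bu,\nabla\cdot\bv_0)_T$, which by the weak formulation \eqref{weakform} — applied with test function $\bv_0$, after accounting for integration by parts element-by-element and the interface/traction jump conditions $\ljump\bsigma(\bu)\bn\rjump_{\Gamma_m}=\bpsi_m$ — equals $(\bf,\bv_0) + \sum_m\langle\bpsi_m,\bv_b\rangle_{\Gamma_m}$ plus interelement boundary terms involving $\bsigma(\bu)\bn$ acting against $\bv_b-\bv_0$. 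The interelement terms on interior edges (not on $\Gamma$) cancel because $\bv_b$ is single-valued and $\bsigma(\bu)\bn$ is continuous there; on $\Gamma_m$ the jump condition converts them to $\langle\bpsi_m,\bv_b^L-\bv_b^R\rangle$ which combines with the $\bpsi_m$ term already present; and on $\partial\Omega$ the term vanishes since $\bv_b=0$ there. Subtracting the discrete equation \eqref{WG} for $\bu_h$ then leaves precisely $a(\be_h,\bv)$ on the left and the residual $\ell(\bu,\bv)$, consisting of the leftover $\langle\bv_b-\bv_0,\,2\mu(Q_{r_1}-I)\epsilon(\bu)\cdot\bn\rangle_{\partial T}$ and $\langle(\bv_b-\bv_0)\cdot\bn,\,\lambda(Q_{r_2}-I)(\nabla\cdot\bu)\rangle_{\partial T}$ terms, on the right.

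The main obstacle I anticipate is the careful bookkeeping of boundary terms across $\partial T$ for all $T$: one must track which pieces live on interior edges of $\E_h^0$ that are not part of any $\Gamma_m$, which live on the material interfaces $\Gamma_m$, and which live on $\partial\Omega$, and verify that the smoothness of $\bu$ within each subdomain $\Omega_i$ together with the prescribed jump conditions makes exactly the right cancellations happen — in particular that the genuine stress $\bsigma(\bu)\bn$ contributions either cancel, land in $\bpsi_m$ terms, or vanish on $\partial\Omega$, leaving only the projection-error residuals. A secondary subtlety is confirming that $(Q_{r_1}-I)\epsilon(\bu)$ and $(Q_{r_2}-I)(\nabla\cdot\bu)$ are the only quantities that survive, which hinges on $r_1,r_2\ge k-1$ so that the pure volume inner products are exact; this is guaranteed by the choices $r_1=r_2=2N+k-1$ in the Definition.
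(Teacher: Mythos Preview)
Your approach is essentially the same as the paper's: use Lemma~\ref{pror1} to replace $\epsilon_w(\bu)$ and $\nabla_w\cdot\bu$ by $Q_{r_1}\epsilon(\bu)$ and $Q_{r_2}(\nabla\cdot\bu)$, expand $a(\bu,\bv)$ via \eqref{2.5new}--\eqref{disdivnew}, drop the projections in the volume pairings because $\epsilon(\bv_0),\nabla\cdot\bv_0\in P_{k-1}$, integrate by parts element-wise using the strong form, collect the $\bv_b$ boundary contributions into the interface term $\sum_m\langle\bpsi_m,\bv_b\rangle_{\Gamma_m}$, and subtract \eqref{WG}.

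Two places in your sketch need tightening. First, you cannot literally invoke the weak formulation \eqref{weakform} with test function $\bv_0$, since $\bv_0\notin[H_0^1(\Omega)]^d$; the paper (and you, implicitly) instead integrates by parts on each element using $-\nabla\cdot\bsigma(\bu_i)=\bf_i$, which produces $\langle\bsigma(\bu)\cdot\bn,\bv_0\rangle_{\partial T}$ terms that then combine with the $\langle\bv_b-\bv_0,\bsigma(\bu)\cdot\bn\rangle_{\partial T}$ pieces from the split $Q_{r_1}=I+(Q_{r_1}-I)$ to give $\langle\bsigma(\bu)\cdot\bn,\bv_b\rangle_{\partial T}$. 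Second, for $\bv\in V_h^0$ the trace $\bv_b$ is single-valued on $\Gamma_m$, so the interface contribution is $\langle\ljump\bsigma(\bu)\bn\rjump,\bv_b\rangle_{\Gamma_m}=\langle\bpsi_m,\bv_b\rangle_{\Gamma_m}$, not $\langle\bpsi_m,\bv_b^L-\bv_b^R\rangle$; this is exactly the term that cancels against the right-hand side of \eqref{WG} upon subtraction. With these corrections your argument coincides with the paper's.
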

\begin{proof}   Using \eqref{pro} and \eqref{pro2}, and substituting $\bvarphi= Q_{r_1} \epsilon(\bu)$ in \eqref{2.5new} and $\phi=Q_{r_2} (\nabla\cdot\bu)$ in \eqref{disdivnew}, we obtain 
\begin{equation}\label{54}
\begin{split}
&\sum_{T\in {\cal T}_h}(2\mu\epsilon_w (\bu), \epsilon_w (\bv))_T+(\lambda \nabla_w \cdot \bu, \nabla_w \cdot \bv)_T\\=&\sum_{T\in {\cal T}_h} (2\mu Q_{r_1} \epsilon (\bu), \epsilon_w \bv)_T+(\lambda Q_{r_2} (\nabla  \cdot \bu), \nabla_w \cdot \bv)_T\\
=&\sum_{T\in {\cal T}_h} (2\mu\epsilon(\bv_0),   Q_{r_1} \epsilon (\bu))_T+ \langle   2\mu(\bv_b-\bv_0), \frac{1}{2}( Q_{r_1} \epsilon (\bu)+Q_{r_1} \epsilon (\bu)^T) \cdot\bn\rangle_{\partial T}\\&+  (\lambda \nabla\cdot \bv_0,  Q_{r_2} (\nabla  \cdot \bu))_T+ \langle \lambda (\bv_b-\bv_0)\cdot\bn,   Q_{r_2} (\nabla  \cdot \bu)\rangle_{\partial T}\\
=&\sum_{T\in {\cal T}_h} (2\mu\epsilon(\bv_0),     \epsilon (\bu))_T+ \langle   2\mu(\bv_b-\bv_0),  Q_{r_1}  \epsilon (\bu) \cdot\bn\rangle_{\partial T}\\&+   (\lambda \nabla\cdot \bv_0,     \nabla  \cdot \bu)_T+ \langle \lambda (\bv_b-\bv_0)\cdot\bn,    Q_{r_2} (\nabla  \cdot \bu)\rangle_{\partial T}\\
=&\sum_{T\in {\cal T}_h}(\bf,\bv_0) +\langle 2\mu \epsilon(\bu)\cdot\bn, \bv_0\rangle_{\partial T}+\langle \lambda \nabla\cdot\bu, \bv_0\cdot\bn\rangle_{\partial T}\\&+\langle   \bv_b-\bv_0, 2\mu Q_{r_1}  \epsilon (\bu) \cdot\bn\rangle_{\partial T} +\langle (\bv_b-\bv_0)\cdot\bn, \lambda   Q_{r_2} (\nabla  \cdot \bu)\rangle_{\partial T}\\
=&\sum_{T\in {\cal T}_h}(\bf,\bv_0)  +\langle   \bv_b-\bv_0, 2\mu( Q_{r_1}-I)  \epsilon (\bu) \cdot\bn\rangle_{\partial T} \\&+\langle (\bv_b-\bv_0)\cdot\bn, \lambda   (Q_{r_2} -I)(\nabla  \cdot \bu)\rangle_{\partial T}+\sum_{m=1}^M \langle \bv_b, \ljump2\mu\epsilon(\bu)\bn\rjump\rangle_{\Gamma_m}\\&+ \sum_{m=1}^M \langle \bv_b, \ljump\lambda\nabla\cdot\bu   \bn\rjump\rangle_{\Gamma_m}
\\=&\sum_{T\in {\cal T}_h}(\bf,\bv_0)  +\langle   \bv_b-\bv_0, 2\mu( Q_{r_1}-I)  \epsilon (\bu) \cdot\bn\rangle_{\partial T} \\&+\langle (\bv_b-\bv_0)\cdot\bn, \lambda   (Q_{r_2} -I)(\nabla  \cdot \bu)\rangle_{\partial T}+\sum_{m=1}^M \langle \bv_b, \bpsi_m   \rangle_{\Gamma_m} 
\end{split}
\end{equation}
where we used \eqref{model}, standard  integration by parts, and the following equalities: $\sum_{T\in {\cal T}_h} \langle 2\mu \epsilon(\bu)\cdot\bn, \bv_b\rangle_{\partial T} =  \sum_{T\in {\cal T}_h} \langle \bv_b, \ljump 2\mu\epsilon(\bu)\bn\rjump\rangle_{\pT}=\sum_{m=1}^M \langle \bv_b, \ljump 2\mu\epsilon(\bu)\bn\rjump\rangle_{\Gamma_m}$ and $\sum_{T\in {\cal T}_h}\langle   \lambda \nabla\cdot\bu, \bv_b\cdot\bn\rangle_{\partial T} = \sum_{T\in {\cal T}_h}\langle \bv_b, \ljump\lambda\nabla\cdot\bu  \bn\rjump\rangle_{\pT}=\sum_{m=1}^M \langle \bv_b, \ljump\lambda\nabla\cdot\bu   \bn\rjump\rangle_{\Gamma_m}$  since $\bv_b=0$ on $\partial \Omega$.

Finally, subtracting \eqref{WG} from \eqref{54}   completes the proof of the lemma.
\end{proof}

\section{Error Estimates}
\begin{lemma}\cite{wy3655}
Let ${\cal T}_h$ denote a finite element partition of the domain $\Omega$ that satisfies the shape regular assumption  outlined in  \cite{wy3655}. For any $0\leq s \leq 1$, $1\leq m \leq k+1$, and $0\leq n \leq k$, the following inequalities hold: 
\begin{eqnarray}\label{error1}
 \sum_{T\in {\cal T}_h}h_T^{2s}\|\epsilon (\bu)- Q_{r_1} \epsilon( \bu)\|^2_{s,T}&\leq& C  h^{2m-2}\|\bu\|^2_{m},\\
 \label{error3} \sum_{T\in {\cal T}_h}h_T^{2s}\|\nabla\cdot \bu- Q_{r_2} \nabla\cdot \bu\|^2_{s,T}&\leq& C  h^{2m-2}\|\bu\|^2_{m},\\
\label{error2}
\sum_{T\in {\cal T}_h}h_T^{2s}\|\bu- Q _0\bu\|^2_{s,T}&\leq& C h^{2n+2}\|\bu\|^2_{n+1}.
\end{eqnarray}
 \end{lemma}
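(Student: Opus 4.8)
The plan is to reduce all three inequalities to a single local polynomial-approximation estimate for the $L^2$ projection and then sum over $\T_h$.

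First I would record the local estimate: writing $\Pi_r$ for the $L^2$ projection onto $P_r(T)$, for $w\in H^t(T)$ with $0\le s\le t\le r+1$ one has, under the shape-regularity assumption of \cite{wy3655},
$$
\|w-\Pi_r w\|_{s,T}\le C\, h_T^{\,t-s}\,|w|_{t,T},
$$
with $C$ independent of $T$ and $h$. This is the standard Bramble--Hilbert/Dupont--Scott estimate; the only delicate point is that $T$ is a (possibly non-convex) polytope rather than an affine image of a fixed reference element. I would handle this as in \cite{wy3655}: the shape-regularity hypothesis supplies a sub-decomposition of $T$ into a uniformly bounded number of shape-regular simplices (equivalently, $T$ is star-shaped with respect to a ball of radius comparable to $h_T$), so that polynomial approximation on $T$ obeys the usual scaling and the Bramble--Hilbert lemma applies with a uniform constant. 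For $s=0$ this is the Dupont--Scott averaged Taylor polynomial estimate; for $s\in(0,1]$ one combines it with the $L^2$-stability $\|\Pi_r w\|_{0,T}\le\|w\|_{0,T}$ and the inverse inequality on $P_r(T)$, and intermediate $s$ follows by interpolation.

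Next, for \eqref{error1} I would apply the local estimate with $w=\epsilon(\bu)$ and $r=r_1$. Since $r_1=2N+k-1$ (or $N+k-1$ in the convex case) satisfies $r_1\ge k-1$, and since $1\le m\le k+1$ gives $m-1\le r_1+1$, we may take $t=m-1$, obtaining $\|\epsilon(\bu)-Q_{r_1}\epsilon(\bu)\|_{s,T}\le C\, h_T^{\,m-1-s}\,|\epsilon(\bu)|_{m-1,T}$. Multiplying by $h_T^{2s}$ cancels the $s$-dependence of the exponent,
$$
h_T^{2s}\|\epsilon(\bu)-Q_{r_1}\epsilon(\bu)\|_{s,T}^2\le C\, h_T^{2m-2}\,|\epsilon(\bu)|_{m-1,T}^2\le C\, h^{2m-2}\,|\epsilon(\bu)|_{m-1,T}^2,
$$
and summing over $T\in\T_h$ with $\sum_{T}|\epsilon(\bu)|_{m-1,T}^2\le|\bu|_m^2\le\|\bu\|_m^2$ yields \eqref{error1}. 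Inequality \eqref{error3} is the same with $w=\nabla\cdot\bu$, $r=r_2$, and $|\nabla\cdot\bu|_{m-1,T}\le C\|\bu\|_{m,T}$. For \eqref{error2} I would apply the local estimate with $w=\bu$, $r=k$ and $t=n+1$ (legitimate since $0\le n\le k$ gives $n+1\le k+1$), obtaining $\|\bu-Q_0\bu\|_{s,T}\le C\,h_T^{\,n+1-s}\,|\bu|_{n+1,T}$; multiplying by $h_T^{2s}$ and summing produces the $h^{2n+2}$ bound.

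I expect the only genuine difficulty to be the first step --- establishing the local projection estimate with a constant uniform over non-convex polytopal elements. Once the shape-regularity framework of \cite{wy3655} (bounded number of faces, sub-triangulation into shape-regular simplices, a chunkiness parameter) is in place, everything else is routine scaling, bookkeeping of the $h_T^{2s}$ factor, and summation over the partition.
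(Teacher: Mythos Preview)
The paper does not prove this lemma: it is stated with the citation \cite{wy3655} and used as a known input, with no argument given. Your sketch is a correct and standard route to these estimates --- reduce to a local $L^2$-projection bound via Bramble--Hilbert on shape-regular polytopes, check that the polynomial degrees $r_1,r_2,k$ are large enough for the stated ranges of $m$ and $n$, absorb the $h_T^{2s}$ weight, and sum. Your identification of the only nontrivial step (uniformity of the constant on non-convex polytopal elements, handled through the shape-regularity hypotheses of \cite{wy3655}) is accurate; once that is in hand the rest is indeed routine.
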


\begin{lemma}
Assume $k\geq 1$ and the coefficients $\mu$ and $\lambda$ are piecewise constants with respect to the finite element partition ${\cal T}_h$. Let $\bu$  be the exact solution of the elasticity interface problem \eqref{model}. Assume $\bu$ is sufficiently regular such that $u\in \prod_{i=1}^N  [H^{k+1}(\Omega_i)]^d$. Then, the following error estimate holds: 
\begin{equation}\label{erroresti1}
\3bar \bu-Q_h\bu \3bar \leq  Ch^{k}\left( \sum_{i=1}^N \|\bu\|^2_{k+1, \Omega_i} \right)^{\frac{1}{2}}.
\end{equation}
\end{lemma}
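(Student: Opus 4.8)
The plan is to bound the energy norm $\3bar \bu - Q_h \bu \3bar$ by reducing it, via the definition \eqref{3norm} and Lemma \ref{pror1}, to the $L^2$-approximation errors of $\epsilon(\bu)$ and $\nabla\cdot\bu$ by the projections $Q_{r_1}$ and $Q_{r_2}$. Concretely, set $\bvarepsilon = \bu - Q_h\bu = \{\bu - Q_0\bu, \bu - Q_b\bu\}$ (using that $\bu$ is continuous enough for its trace to make sense, though the argument is element-local so interface discontinuities are harmless because $\mu,\lambda$ are piecewise constant). By Lemma \ref{pror1}, $\epsilon_w(Q_h\bu) = Q_{r_1}\epsilon(Q_0\bu)$ is \emph{not} directly what appears; instead I would use the commutation-type identity \eqref{2.5new}/\eqref{disdivnew} applied to $\bvarepsilon$, or more cleanly compute $\epsilon_w(\bu - Q_h\bu)$ via \eqref{2.5} directly. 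The cleanest route: for $\bvarphi \in [P_{r_1}(T)]^{d\times d}$, use \eqref{2.5new} with $\bv = \bu - Q_h\bu$ to get
\begin{equation*}
(\epsilon_w(\bu - Q_h\bu), \bvarphi)_T = (\epsilon(\bu - Q_0\bu), \bvarphi)_T + \langle (Q_b\bu - Q_0\bu) - (\bu - \bu), \tfrac12(\bvarphi+\bvarphi^T)\cdot\bn\rangle_{\partial T},
\end{equation*}
so that, after using that $Q_b$ is the $L^2$ projection on $\partial T$ and inserting $\bu$, the boundary term becomes $\langle Q_b(\bu - Q_0\bu), \cdot\rangle_{\partial T} = \langle \bu - Q_0\bu, \cdot\rangle_{\partial T}$ up to the projection, hence reduces to $\langle \bu - Q_0\bu, \tfrac12(\bvarphi+\bvarphi^T)\cdot\bn - Q_b(\cdots)\rangle$. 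Taking $\bvarphi = \epsilon_w(\bu - Q_h\bu)$ and applying Cauchy--Schwarz plus the trace inequalities \eqref{tracein} and \eqref{trace} and an inverse inequality then yields $\|\epsilon_w(\bu - Q_h\bu)\|_T \le C(\|\epsilon(\bu) - Q_{r_1}\epsilon(\bu)\|_T + \|\epsilon(\bu)-\epsilon(Q_0\bu)\|_T + h_T^{-1/2}\|\bu - Q_0\bu\|_{\partial T})$, and the analogous bound for $\nabla_w\cdot(\bu - Q_h\bu)$ in terms of $Q_{r_2}$.

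The second step is to insert these elementwise bounds into \eqref{3norm}, pull out the piecewise-constant $\mu,\lambda$, and sum over $T \in \T_h$. The three groups of terms are then controlled by the approximation estimates from the preceding lemma: the $Q_{r_1}$- and $Q_{r_2}$-consistency terms by \eqref{error1} and \eqref{error3} with $s=0$, $m=k+1$; the term $\|\epsilon(\bu) - \epsilon(Q_0\bu)\|_T = |\bu - Q_0\bu|_{1,T}$ by \eqref{error2} with $s=0$, $n=k$; and the boundary term $h_T^{-1/2}\|\bu - Q_0\bu\|_{\partial T}$ by first applying \eqref{tracein} to $\bu - Q_0\bu$ and then \eqref{error2} with $s=0$ and $s=1$, $n=k$. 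Each contributes $Ch^{2k}\|\bu\|_{k+1,\Omega_i}^2$ on the subdomain $\Omega_i$ containing $T$; summing over subdomains gives $Ch^{2k}\sum_{i=1}^N\|\bu\|_{k+1,\Omega_i}^2$, and taking square roots produces \eqref{erroresti1}.

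I expect the main obstacle to be bookkeeping the boundary inner-product term in the weak-strain identity correctly — specifically, exploiting the $L^2$-orthogonality of $Q_b$ (which projects onto $S_k(e) = [P_{k-1}(e)]^d + P_{RM}(e)$, \emph{not} the full $[P_k(e)]^d$, so one must check that $\tfrac12(\bvarphi+\bvarphi^T)\cdot\bn$ restricted to $e$ need not lie in $S_k(e)$ and hence one genuinely keeps $\bu - Q_0\bu$ rather than $Q_b\bu - Q_0\bu$ in the estimate). Since $r_1, r_2$ are large ($2N+k-1$), $\bvarphi\cdot\bn$ has degree well above $k-1$, so the orthogonality only removes the low-order part; the safe move is simply to write the boundary term as $\langle \bu - Q_0\bu, \tfrac12(\bvarphi+\bvarphi^T)\cdot\bn\rangle_{\partial T}$ after noting $\langle Q_b\bu - \bu, \cdot\rangle_{\partial T}$ vanishes against any $S_k(e)$ component and bounding the rest directly by Cauchy--Schwarz, \eqref{trace} on $\bvarphi$, and \eqref{tracein} on $\bu - Q_0\bu$. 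A minor secondary point is ensuring the piecewise-constant assumption on $\mu,\lambda$ is used only to factor these scalars through the elementwise sums, which is immediate. Everything else is a routine assembly of the approximation lemma.
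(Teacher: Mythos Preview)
Your proposal is correct and follows the same approach as the paper: expand $(\epsilon_w(\bu-Q_h\bu),\bvarphi)_T$ via \eqref{2.5new} (and the analogous divergence identity \eqref{disdivnew}), apply Cauchy--Schwarz together with the trace inequalities \eqref{tracein}--\eqref{trace}, set $\bvarphi=\epsilon_w(\bu-Q_h\bu)$ (respectively $\phi=\nabla_w\cdot(\bu-Q_h\bu)$), and finish with the approximation estimate \eqref{error2} for $n=k$. The paper handles the boundary term more directly---it simply bounds $\|Q_0\bu-Q_b\bu\|_{\partial T}$ by the trace of $Q_0\bu-\bu$ without invoking any $Q_b$-orthogonality---and it never introduces your extra term $\|\epsilon(\bu)-Q_{r_1}\epsilon(\bu)\|_T$, so \eqref{error1}--\eqref{error3} play no role in this lemma and only \eqref{error2} is used.
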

\begin{proof}
Using the definitions of $\mu$ and $\lambda$ (the Lam$\acute{e}$ constants),  \eqref{2.5new}, the Cauchy-Schwarz inequality, the trace inequalities \eqref{tracein}-\eqref{trace},    and the estimate \eqref{error2} with $n=k$ and $s=0, 1$,  we obtain:
\begin{equation*}
\begin{split} 
&\quad\sum_{T\in {\cal T}_h}(2\mu\epsilon_w(\bu-Q_h\bu), \bvarphi)_T \\
&=\sum_{T\in {\cal T}_h} (2\mu\epsilon(\bu-Q_0\bu),  \bvarphi)_T+\langle 2\mu(Q_0\bu-Q_b\bu), \frac{1}{2}(\bvarphi+\bvarphi^T)\cdot\bn\rangle_{\partial T}\\
&\leq \Big(\sum_{T\in {\cal T}_h}\|2\mu\epsilon(\bu-Q_0\bu)\|^2_T\Big)^{\frac{1}{2}} \Big(\sum_{T\in {\cal T}_h}\|\bvarphi\|_T^2\Big)^{\frac{1}{2}}\\&\quad+ \Big(\sum_{T\in {\cal T}_h} \|2\mu(Q_0\bu-Q_b\bu)\|_{\partial T} ^2\Big)^{\frac{1}{2}}\Big(\sum_{T\in {\cal T}_h} \|\bvarphi\|_{\partial T}^2\Big)^{\frac{1}{2}}\\
&\leq C\Big(\ \sum_{T\in {\cal T}_h} \|\epsilon(\bu-Q_0\bu)\|_T^2\Big)^{\frac{1}{2}}\Big(\sum_{T\in {\cal T}_h} \|\bvarphi\|_T^2\Big)^{\frac{1}{2}}\\&\quad+C\Big(\sum_{T\in {\cal T}_h}h_T^{-1} \|Q_0\bu-\bu\|_{T} ^2+h_T \|Q_0\bu-\bu\|_{1,T} ^2\Big)^{\frac{1}{2}}\Big(\sum_{T\in {\cal T}_h} h_T^{-1}\|\bvarphi\|_T^2\Big)^{\frac{1}{2}}\\
&\leq Ch^k\left( \sum_{i=1}^N \|\bu\|^2_{k+1, \Omega_i} \right)\Big(\sum_{T\in {\cal T}_h} \|\bvarphi\|_T^2\Big)^{\frac{1}{2}},
\end{split}
\end{equation*}
 for any $\bvarphi\in [P_{r_1}(T)]^{d\times d}$. 
 
Letting $\bvarphi= \epsilon_w(\bu-Q_h\bu)$ yields 
\begin{equation}\label{n1}
\begin{split}
   &  \sum_{T\in {\cal T}_h}(2\mu\epsilon_w(\bu-Q_h\bu),  \epsilon_w(\bu-Q_h\bu))_T\\\leq & 
 Ch^k\left( \sum_{i=1}^N \|\bu\|^2_{k+1, \Omega_i} \right)\Big(\sum_{T\in {\cal T}_h} \|\epsilon_w(\bu-Q_h\bu)\|_T^2\Big)^{\frac{1}{2}}. 
\end{split}
\end{equation}

Using \eqref{disdivnew}, the Cauchy-Schwarz inequality, the trace inequalities \eqref{tracein}-\eqref{trace}, and the estimate \eqref{error2} with $n=k$ and $s=0, 1$,  we have

  \begin{equation*}
\begin{split}
&\quad\sum_{T\in {\cal T}_h}(\lambda \nabla_w \cdot(\bu-Q_h\bu), \phi)_T \\
&=\sum_{T\in {\cal T}_h} (\lambda \nabla\cdot(\bu-Q_0\bu),  \phi)_T+\langle \lambda(Q_0\bu-Q_b\bu)\cdot\bn,  \phi\rangle_{\partial T}\\
&\leq \Big(\sum_{T\in {\cal T}_h}\|\lambda \nabla\cdot(\bu-Q_0\bu)\|^2_T\Big)^{\frac{1}{2}} \Big(\sum_{T\in {\cal T}_h}\|\phi\|_T^2\Big)^{\frac{1}{2}}\\&\quad+ \Big(\sum_{T\in {\cal T}_h} \|\lambda(Q_0\bu-Q_b\bu)\cdot\bn\|_{\partial T} ^2\Big)^{\frac{1}{2}}\Big(\sum_{T\in {\cal T}_h} \|\phi\|_{\partial T}^2\Big)^{\frac{1}{2}}\\
&\leq C\Big(\ \sum_{T\in {\cal T}_h} \|\nabla\cdot(\bu-Q_0\bu)\|_T^2\Big)^{\frac{1}{2}}\Big(\sum_{T\in {\cal T}_h} \|\phi\|_T^2\Big)^{\frac{1}{2}}\\&\quad+C\Big(\sum_{T\in {\cal T}_h}h_T^{-1} \|Q_0\bu-\bu\|_{T} ^2+h_T \|Q_0\bu-\bu\|_{1,T} ^2\Big)^{\frac{1}{2}}\Big(\sum_{T\in {\cal T}_h}h_T^{-1}\|\phi\|_T^2\Big)^{\frac{1}{2}}\\
&\leq Ch^k\left( \sum_{i=1}^N \|\bu\|^2_{k+1, \Omega_i} \right)\Big(\sum_{T\in {\cal T}_h} \|\phi\|_T^2\Big)^{\frac{1}{2}},
\end{split}
\end{equation*}
 for any $\phi\in  P_{r_2}(T)$. 
 
Letting $\phi= \nabla_w \cdot(\bu-Q_h\bu)$ gives 
\begin{equation}\label{n2}
\begin{split}
  & \sum_{T\in {\cal T}_h}(\lambda \nabla_w \cdot(\bu-Q_h\bu), \nabla_w \cdot(\bu-Q_h\bu))_T\\ \leq &
 Ch^k\left( \sum_{i=1}^N \|\bu\|^2_{k+1, \Omega_i} \right)\Big(\sum_{T\in {\cal T}_h} \|\nabla_w \cdot(\bu-Q_h\bu)\|_T^2\Big)^{\frac{1}{2}}. 
\end{split}
\end{equation}

Combining \eqref{n1} and \eqref{n2}, we conclude that $$
 \3bar\bu-Q_h\bu\3bar ^2\leq 
 Ch^k\left( \sum_{i=1}^N \|\bu\|^2_{k+1, \Omega_i} \right)\Big(\Big(\sum_{T\in {\cal T}_h} \|\epsilon_w(\bu-Q_h\bu)\|_T^2\Big)^{\frac{1}{2}}+\Big(\sum_{T\in {\cal T}_h} \|\nabla_w \cdot(\bu-Q_h\bu)\|_T^2\Big)^{\frac{1}{2}}\Big).
 $$
 This completes the proof of the lemma.
\end{proof}

\begin{theorem}
Assume  the exact solution $\bu$ of the elasticity interface problem \eqref{model} is sufficiently regular such that    $u\in \prod_{i=1}^N  [H^{k+1}(\Omega_i)]^d$. Then, there exists a constant $C>0$, such that the following error estimate holds: 
\begin{equation}\label{trinorm}
\3bar \bu-\bu_h\3bar \leq Ch^k\left( \sum_{i=1}^N \|\bu\|^2_{k+1, \Omega_i} \right).
\end{equation}
\end{theorem}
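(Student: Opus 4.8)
The plan is to combine the error equation from Lemma~\ref{errorequa} with the approximation estimate \eqref{erroresti1} for $\3bar\bu-Q_h\bu\3bar$, using the standard triangle-inequality split $\bu-\bu_h = (\bu-Q_h\bu) + (Q_h\bu-\bu_h)$. Since $Q_h\bu-\bu_h \in V_h^0$, it suffices to bound $\3bar Q_h\bu-\bu_h\3bar$, which by the definition \eqref{3norm} of the triple-bar norm equals $a(Q_h\bu-\bu_h, Q_h\bu-\bu_h)^{1/2}$ up to the norm structure. First I would set $\bpsi_h := Q_h\bu - \bu_h \in V_h^0$ and test the error equation \eqref{erroreqn} with $\bv = \bpsi_h$; writing $\be_h = \bu - \bu_h = (\bu - Q_h\bu) + \bpsi_h$ and using bilinearity of $a(\cdot,\cdot)$, I get
\[
a(\bpsi_h,\bpsi_h) = \ell(\bu,\bpsi_h) - \sum_{T\in\T_h}\big(2\mu\,\epsilon_w(\bu-Q_h\bu),\epsilon_w(\bpsi_h)\big)_T + \big(\lambda\,\nabla_w\cdot(\bu-Q_h\bu),\nabla_w\cdot(\bpsi_h)\big)_T.
\]
The last sum is bounded by $\3bar\bu-Q_h\bu\3bar\,\3bar\bpsi_h\3bar$ via Cauchy--Schwarz and then by \eqref{erroresti1}; this contributes $Ch^k(\sum_i\|\bu\|_{k+1,\Omega_i}^2)^{1/2}\3bar\bpsi_h\3bar$.

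The substantive step is bounding the functional $\ell(\bu,\bpsi_h)$. Each term of $\ell$ has the shape $\langle \bv_b - \bv_0,\ 2\mu(Q_{r_1}-I)\epsilon(\bu)\cdot\bn\rangle_{\partial T}$ (and the analogous divergence term with $\lambda(Q_{r_2}-I)(\nabla\cdot\bu)$). I would apply Cauchy--Schwarz on each $\partial T$ to get
\[
|\ell(\bu,\bpsi_h)| \le C\Big(\sum_{T}h_T^{-1}\|\bpsi_{h,0}-\bpsi_{h,b}\|_{\partial T}^2\Big)^{1/2}\Big(\sum_T h_T\|(Q_{r_1}-I)\epsilon(\bu)\|_{\partial T}^2 + h_T\|(Q_{r_2}-I)(\nabla\cdot\bu)\|_{\partial T}^2\Big)^{1/2},
\]
using that $\mu,\lambda$ are piecewise constant and $\lambda/\mu$ bounded. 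The first factor is $\le C\|\bpsi_h\|_{1,h} \le C\3bar\bpsi_h\3bar$ by the definition \eqref{disnorm} and the norm equivalence \eqref{normeq}. For the second factor, the trace inequality \eqref{tracein} applied to $(Q_{r_1}-I)\epsilon(\bu)$ gives $h_T\|(Q_{r_1}-I)\epsilon(\bu)\|_{\partial T}^2 \le C(\|(Q_{r_1}-I)\epsilon(\bu)\|_T^2 + h_T^2\|(Q_{r_1}-I)\epsilon(\bu)\|_{1,T}^2)$, and then \eqref{error1} with $m=k+1$, $s=0,1$ bounds the sum over $T$ by $Ch^{2k}\sum_i\|\bu\|_{k+1,\Omega_i}^2$; the divergence term is handled identically by \eqref{error3}. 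Hence $|\ell(\bu,\bpsi_h)| \le Ch^k(\sum_i\|\bu\|_{k+1,\Omega_i}^2)^{1/2}\3bar\bpsi_h\3bar$.

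Putting the two bounds together, $a(\bpsi_h,\bpsi_h) = \3bar\bpsi_h\3bar^2 \le Ch^k(\sum_i\|\bu\|_{k+1,\Omega_i}^2)^{1/2}\3bar\bpsi_h\3bar$, so $\3bar\bpsi_h\3bar \le Ch^k(\sum_i\|\bu\|_{k+1,\Omega_i}^2)^{1/2}$. Finally the triangle inequality $\3bar\bu-\bu_h\3bar \le \3bar\bu-Q_h\bu\3bar + \3bar\bpsi_h\3bar$, together with \eqref{erroresti1}, yields \eqref{trinorm}. The main obstacle I anticipate is the careful handling of the boundary terms in $\ell(\bu,\bpsi_h)$ on non-convex polytopal elements: one must be sure the trace inequality \eqref{tracein} is applied to the fixed quantity $(Q_{r_1}-I)\epsilon(\bu)$ (not to a discrete function, where only \eqref{trace} would be available) and that the interpolation estimates \eqref{error1}--\eqref{error3} are valid with the chosen $r_1 = r_2 = 2N+k-1$; the piecewise-constant assumption on $\mu,\lambda$ is what lets these constants be absorbed cleanly. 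I also note the statement \eqref{trinorm} as written has $h^k(\sum_i\|\bu\|_{k+1,\Omega_i}^2)$ without the square root, which I would record as written but which the argument above naturally produces with the square root.
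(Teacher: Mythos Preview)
Your proposal is correct and follows essentially the same approach as the paper: both bound $\ell(\bu,\bv)$ via Cauchy--Schwarz, the trace inequality \eqref{tracein}, and the projection estimates \eqref{error1}--\eqref{error3}, then combine with the error equation \eqref{erroreqn} and the approximation bound \eqref{erroresti1}. Your organization (bound $\3bar Q_h\bu-\bu_h\3bar$ directly, then apply the triangle inequality) is slightly cleaner than the paper's quadratic-inequality manipulation of $\3bar\bu-\bu_h\3bar^2$, and your observation about the missing square root in \eqref{trinorm} is correct---the paper's argument, like yours, naturally produces $h^k\big(\sum_i\|\bu\|_{k+1,\Omega_i}^2\big)^{1/2}$.
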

\begin{proof}
For   the right-hand side of the error equation \eqref{erroreqn}, we apply the Cauchy-Schwarz inequality, the trace inequality \eqref{tracein},  the estimates \eqref{error1}-\eqref{error3} with $m=k+1$ and $s=0,1$, and \eqref{normeq},  to obtain   
\begin{equation}\label{erroreqn1}
\begin{split}
& \Big|\sum_{T\in {\cal T}_h}\langle   \bv_b-\bv_0, 2\mu( Q_{r_1}-I)  \epsilon (\bu) \cdot\bn\rangle_{\partial T} \\&+\langle (\bv_b-\bv_0)\cdot\bn, \lambda   (Q_{r_2} -I)(\nabla  \cdot \bu)\rangle_{\partial T}\Big|\\
\leq & C\Big(\sum_{T\in {\cal T}_h}\|2\mu( Q_{r_1}-I)  \epsilon (\bu) \cdot\bn\|^2_T+h_T^2\|2\mu( Q_{r_1}-I)  \epsilon (\bu) \cdot\bn\|^2_{1,T}\Big)^{\frac{1}{2}}  \\& \cdot\Big(\sum_{T\in {\cal T}_h}h_T^{-1}\|\bv_b-\bv_0\|^2_{\partial T}\Big)^{\frac{1}{2}}\\
&+C\Big(\sum_{T\in {\cal T}_h}\|\lambda(Q_{r_2} -I)(\nabla  \cdot \bu)\|^2_T+h_T^2\|\lambda(Q_{r_2} -I)(\nabla  \cdot \bu)\|^2_{1,T}\Big)^{\frac{1}{2}}  \\&\cdot\Big(\sum_{T\in {\cal T}_h}h_T^{-1}\|\bv_b-\bv_0\|^2_{\partial T}\Big)^{\frac{1}{2}}\\
\leq & Ch^k\left( \sum_{i=1}^N \|\bu\|^2_{k+1, \Omega_i} \right) \| \bv\|_{1,h}\\
\leq & Ch^k\left( \sum_{i=1}^N \|\bu\|^2_{k+1, \Omega_i} \right) \3bar \bv\3bar.
\end{split}
\end{equation}
Substituting \eqref{erroreqn1}  into \eqref{erroreqn}, we have
\begin{equation}\label{err}
\sum_{T\in {\cal T}_h}(2\mu\epsilon_w (\be_h), \epsilon_w (\bv))_T+(\lambda \nabla_w \cdot \be_h, \nabla_w \cdot \bv)_T\leq   Ch^k\left( \sum_{i=1}^N \|\bu\|^2_{k+1, \Omega_i} \right) \3bar  \bv\3bar.
\end{equation} 

Now, by  applying the  Cauchy-Schwarz inequality and  substituting  $\bv=Q_h\bu-\bu_h$  into \eqref{err}, along with  the estimate \eqref{erroresti1}, we obtain
\begin{equation*}
\begin{split}
& \3bar \bu-\bu_h\3bar^2\\=&\sum_{T\in {\cal T}_h}(2\mu\epsilon_w (\bu-\bu_h), \epsilon_w (\bu-Q_h\bu))_T+(2\mu\epsilon_w (\bu-\bu_h), \epsilon_w (Q_h\bu-\bu_h))_T\\
&+(\lambda\nabla_w \cdot(\bu-\bu_h), \nabla_w \cdot (\bu-Q_h\bu))_T+(\lambda\nabla_w \cdot (\bu-\bu_h), \nabla_w \cdot (Q_h\bu-\bu_h))_T\\
\leq &\Big(\sum_{T\in {\cal T}_h}2\mu\|\epsilon_w (\bu-\bu_h)\|^2_T\Big)^{\frac{1}{2}} \Big(\sum_{T\in {\cal T}_h}2\mu\| \epsilon_w(\bu-Q_h\bu)\|^2_T\Big)^{\frac{1}{2}} \\
&+ \Big(\sum_{T\in {\cal T}_h}\lambda\|\nabla_w \cdot(\bu-\bu_h)\|^2_T\Big)^{\frac{1}{2}} \Big(\sum_{T\in {\cal T}_h}\lambda\| \nabla_w \cdot(\bu-Q_h\bu)\|^2_T\Big)^{\frac{1}{2}} \\& +\sum_{T\in {\cal T}_h} (2\mu\epsilon_w (\bu-\bu_h), \epsilon_w (Q_h\bu-\bu_h))_T+(\lambda\nabla_w \cdot (\bu-\bu_h), \nabla_w \cdot (Q_h\bu-\bu_h))_T\\
\leq &\3bar \bu-\bu_h \3bar  \3bar \bu-Q_h\bu \3bar+ Ch^k\left( \sum_{i=1}^N \|\bu\|^2_{k+1, \Omega_i} \right) \3bar Q_h\bu-\bu_h\3bar\\
\leq &\3bar \bu-\bu_h  \3bar  h^k\left( \sum_{i=1}^N \|\bu\|^2_{k+1, \Omega_i} \right) + Ch^k\left( \sum_{i=1}^N \|\bu\|^2_{k+1, \Omega_i} \right)  (\3bar Q_h\bu-\bu\3bar+\3bar \bu-\bu_h \3bar)  \\
\leq &\3bar \bu-\bu_h  \3bar  h^k\left( \sum_{i=1}^N \|\bu\|^2_{k+1, \Omega_i} \right) + Ch^k\left( \sum_{i=1}^N \|\bu\|^2_{k+1, \Omega_i} \right)   h^k\left( \sum_{i=1}^N \|\bu\|^2_{k+1, \Omega_i} \right)\\&+Ch^k\left( \sum_{i=1}^N \|\bu\|^2_{k+1, \Omega_i} \right) \3bar \bu-\bu_h\3bar.
\end{split}
\end{equation*}
This simplifies to 
\begin{equation*}
\begin{split}
 \3bar \bu-\bu_h\3bar  \leq Ch^k\left( \sum_{i=1}^N \|\bu\|^2_{k+1, \Omega_i} \right).
\end{split}
\end{equation*} 

Thus, the proof of the theorem is complete.
\end{proof}
 
\section{Numerical test}

We solve an interface elasticity problem \eqref{model} on two subdomains in two cases, $\Omega=(0,1)^2$:  
\an{ \label{o1}&& \Omega_0&=(\frac 12, 1)\times (0,1), &\quad \Omega_1&=\Omega\setminus \Omega_0, \\
     && \Omega_0&=(\frac 14, \frac 34)\times (\frac 14, \frac 34),
         &\quad \Omega_1&=\Omega\setminus \Omega_0.  && \label{o2} }
         
\subsection{Example \ref{ex-1}}\label{ex-1} In order to find the interface singularity,  we compute the
 first example without knowing its exact solution.
 Let $\sigma$ and $\lambda$ in \eqref{model} be defined by
 \an{ \label{l-1} 2\sigma=\lambda= \begin{cases} 1,
               & \t{ in } \ \Omega\setminus\Omega_0, \\
    10, & \t{ in } \  \Omega_0, 
        \end{cases}
    } where where $\Omega$ and $\Omega_0$ are defined in \eqref{o1}.  
    The right-hand side function is the simplest one,
\an{\label{f-1} \b f = \p{1\\-1}, \quad \ \t{on } \ \Omega. }

We solve the interface problem \eqref{model} with the coefficients in \eqref{l-1} and the
    $\b f$ in \eqref{f-1}  by the $P_3$ WG finite element on the (non-convex polygonal) grid $G_3$ shown
     in Figure \ref{g2-1}.
The solution is plotted in Figure \ref{gs-1}.

\begin{figure}[H] \centering
  \begin{picture}(420,100)(0,20)
  \put(0,-455){\includegraphics[width=410pt]{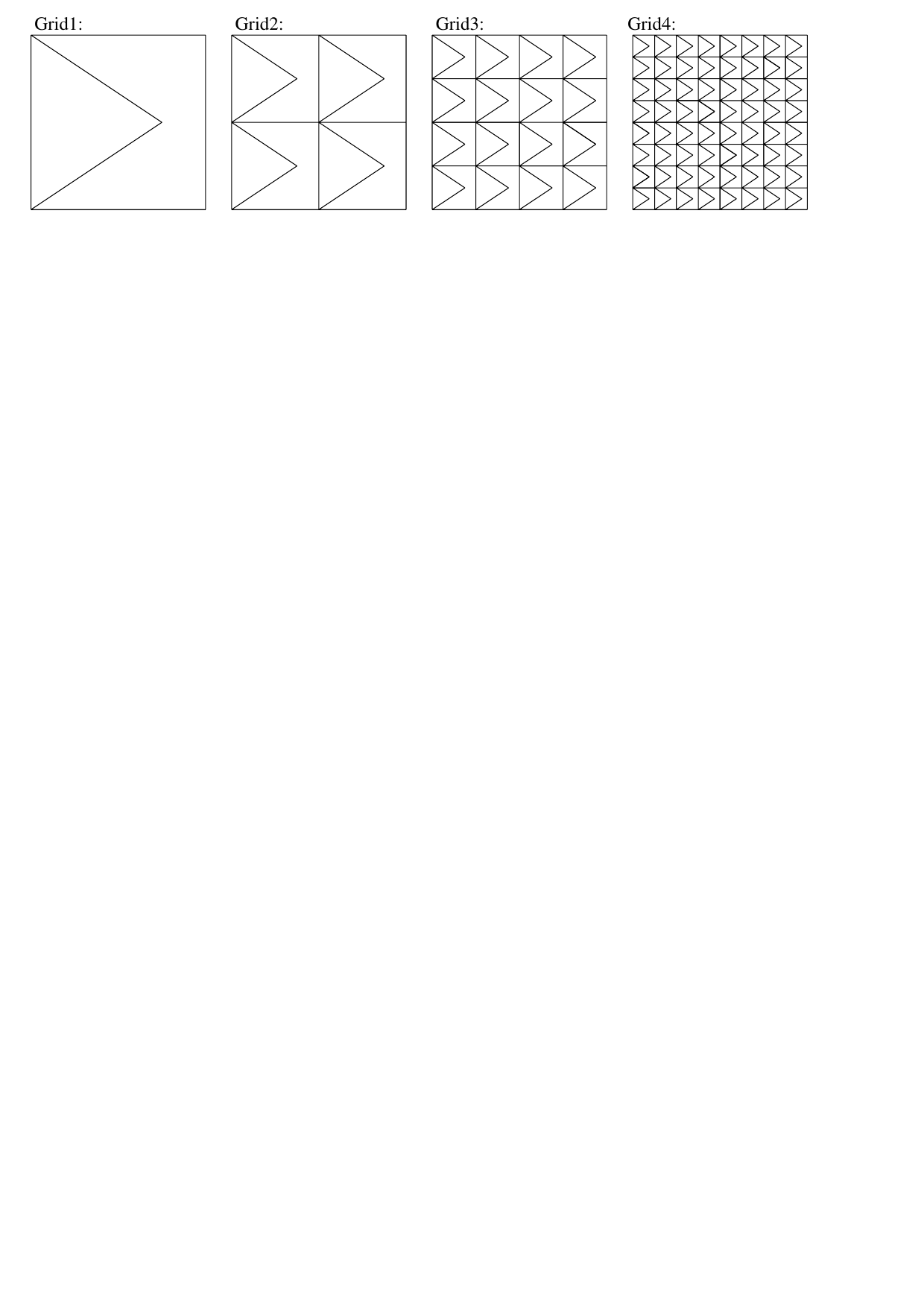}}
  \end{picture}
  \caption{Type-1 non-convex polygonal grids: first four grids $G_1$--$G_4$. } \label{g2-1}
	\end{figure}

We can see, from Figure \ref{gs-1},
   that the solution is continuous and nonzero at the interface and that
  the normal stress jumps at the interface.
We would use these two restrictions to design our next example.

\begin{figure}[H] \centering
  \begin{picture}(320,80)(0,0)
  \put(-25,-215){\includegraphics[width=3.1in]{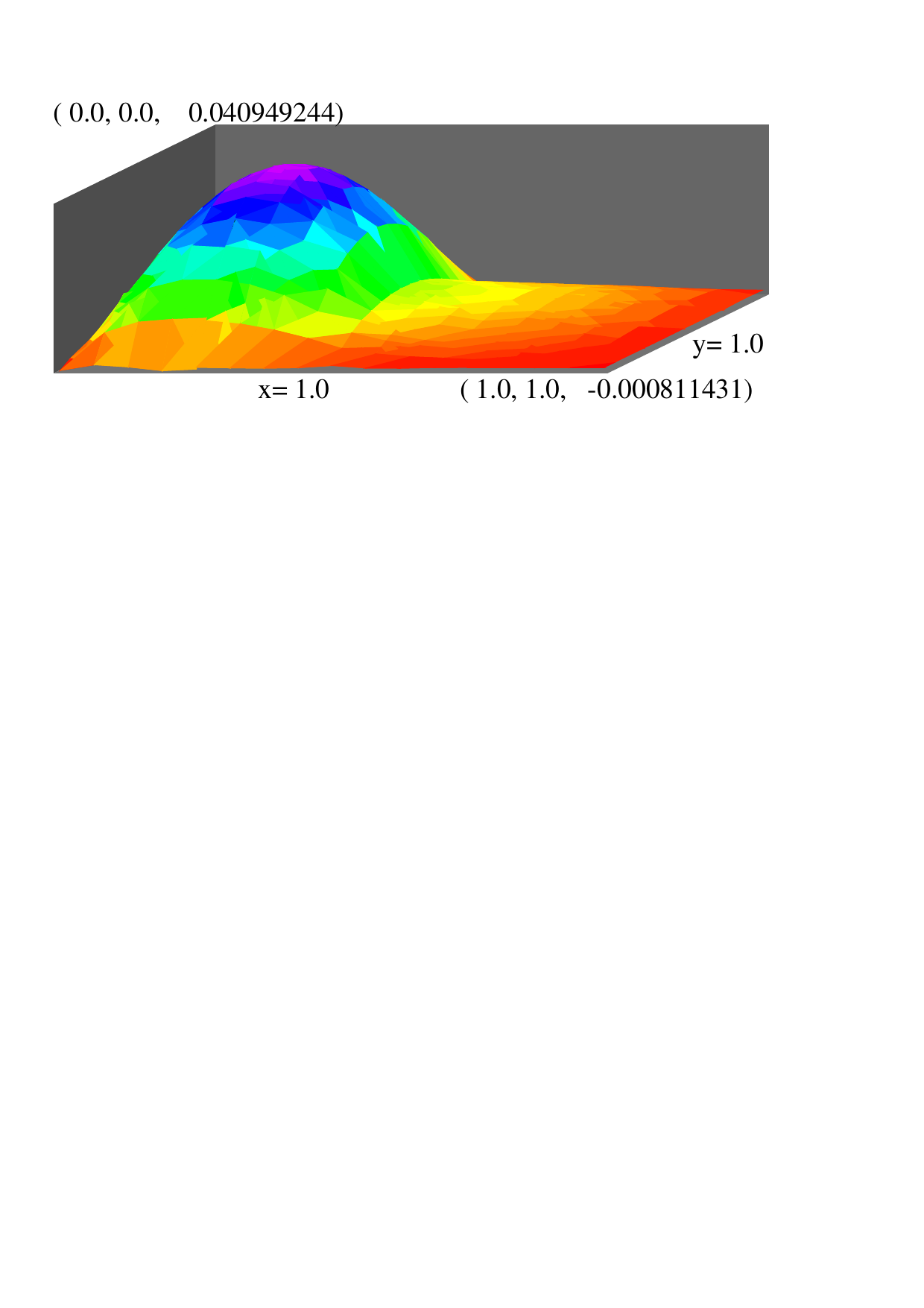}}
  \put(152,-215){\includegraphics[width=3.1in]{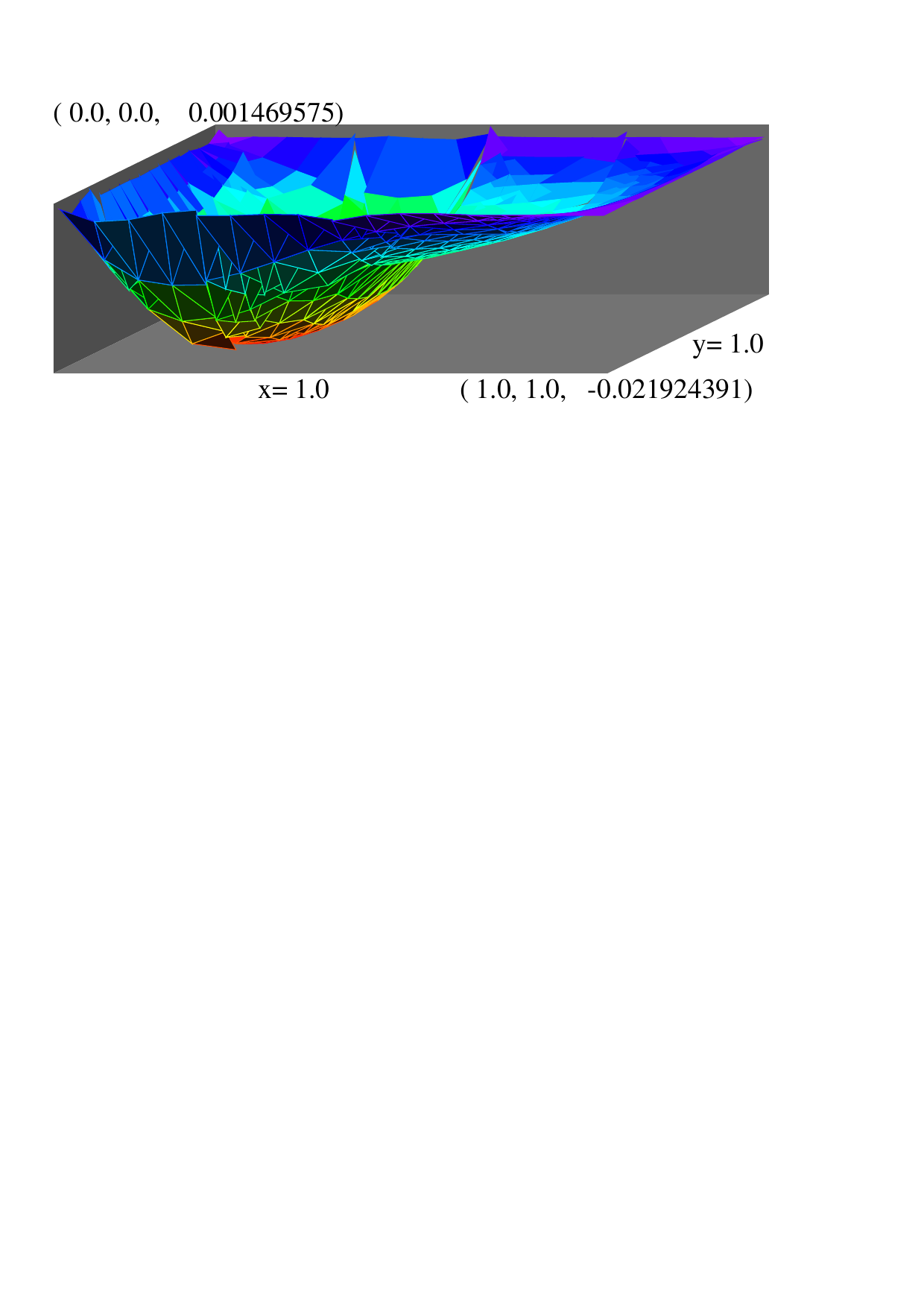}}
  \end{picture}
  \caption{The $(\b u_h)_1$ and $(\b u_h)_2$ of the $P_3$ WG solution
       for \eqref{model} with \eqref{l-1} and \eqref{f-1}
     on Grid 3 in Figure \ref{g2-1}. } \label{gs-1}
	\end{figure}

\subsection{Example \ref{ex-2}}\label{ex-2} 
We choose the coefficients and the exact solution of \eqref{model} as 
 \an{ \label{2-1} 2\sigma&=\lambda= \begin{cases} 1,
               & \t{ in } \ \Omega\setminus\Omega_0, \\
    1, & \t{ in } \  \Omega_0, 
        \end{cases} \\
  \label{2-s1} \b u&=\begin{cases} 2 x^2(1-x)y\p{ 4 x y - 16 y^2 - 3x + 8 y + 1\\
                                       4 x y - \,\; 8 y^2 - 3 x + 4 y + 1} ,
    & \t{ in } \ \Omega\setminus\Omega_0, \\
    -2 x^2(1-x)(1-y)\p{ 8 y^2 - 4 y  + x - 1  \\ \qquad\qquad \quad
       x-1}, & \t{ in } \  \Omega_0, 
        \end{cases}
    } where where $\Omega$ and $\Omega_0$ are defined in \eqref{o1}.  
    
This problem is smooth.  We need its data for comparisons with the singular solutions.
We compute the solution \eqref{2-s1} by three WG finite element methods where
  the weak symmetric gradient and the weak divergence are computed 
    with $r_1=r_2=k+1$ and $r_1=r_2=k+1$ in \eqref{w-d} on the triangles and the non-convex polygons
   in Figure \ref{g2-1} respectively.
The results are listed in Table \ref{t2-1} where we can see all solutions converge at the
  corresponding optimal orders.
  In fact, it seems that $P_2$ and $P_3$ WG methods converge at super orders.
 We do not have a theory for this superconvergence.   
 We would guess that it is because half of the polygons on each grid are triangles while
  some triangular WG methods are superconvergent.

  \begin{table}[H]
  \caption{ Error profile for computing \eqref{2-s1} on Figure \ref{g2-1} meshes.} \label{t2-1}
\begin{center}  
   \begin{tabular}{c|rr|rr}  
 \hline 
$G_i$ & 
       $\|\b u-\b u_h\|_{0}$&$O(h^r)$  &  $\|\nabla_w(\b u-\b u_h)\|_0$ & $O(h^r)$  \\ \hline  
     & \multicolumn{4}{c}{By the $P_1$ \eqref{Vk} WG element}  \\ 
 6&    0.499E-03 &  1.9&    0.356E-01 &  1.1 \\
 7&    0.126E-03 &  2.0&    0.177E-01 &  1.0 \\
 8&    0.315E-04 &  2.0&    0.885E-02 &  1.0 \\
 \hline 
     & \multicolumn{4}{c}{By the $P_2$ \eqref{Vk} WG element}  \\  
 4&    0.250E-03 &  4.0&    0.333E-01 &  3.1 \\
 5&    0.167E-04 &  3.9&    0.410E-02 &  3.0 \\
 6&    0.135E-05 &  3.6&    0.543E-03 &  2.9 \\
 \hline 
          & \multicolumn{4}{c}{By the $P_3$ \eqref{Vk} WG element}  \\ 
 4&    0.350E-04 &  5.1&    0.747E-02 &  4.1 \\
 5&    0.107E-05 &  5.0&    0.457E-03 &  4.0 \\
 6&    0.333E-07 &  5.0&    0.283E-04 &  4.0 \\
 \hline 
\end{tabular} \end{center}  \end{table}
 
We next recompute the solution \eqref{2-s1} on more non-convex polygonal grids, 
 shown in Figure \ref{g2-2}. 
 Here the weak symmetric gradient and the weak divergence are computed 
    with $r_1=r_2=k+1$ and $r_1=r_2=k+2$ in \eqref{w-d} on the 5-edge non-convex polygons and the 
      7-edge non-convex polygons
   in Figure \ref{g2-2} respectively.
The results are listed in Table \ref{t2-2} where we can see all solutions converge at the
  corresponding optimal orders.
Unlike the last computation on the other grids, there is no more superconvergence phenomenon.
 
\begin{figure}[H] \centering
  \begin{picture}(420,100)(0,20)
  \put(0,-455){\includegraphics[width=410pt]{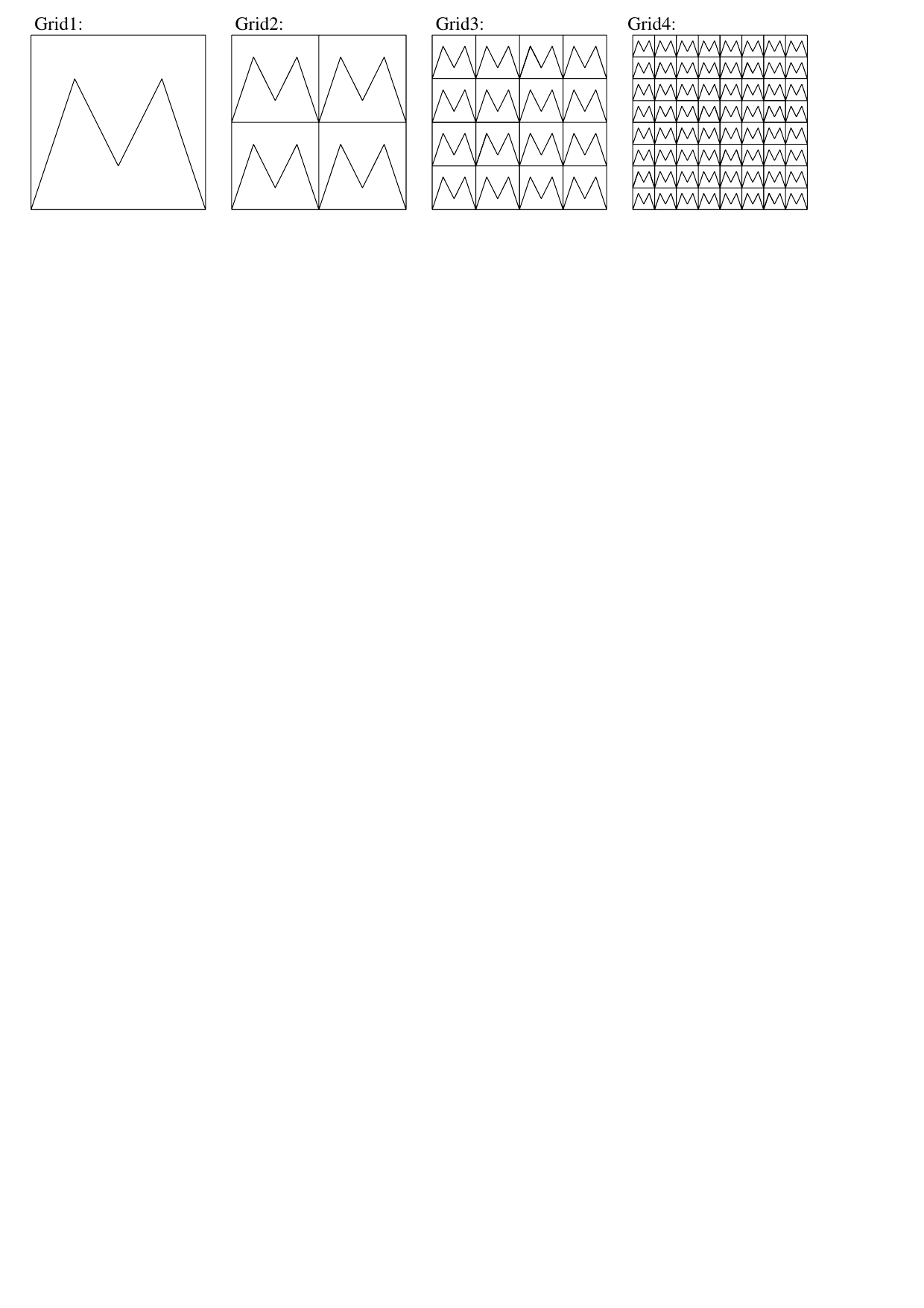}}
  \end{picture}
  \caption{Type-2 non-convex polygonal grids: first four grids $G_1$--$G_4$. } \label{g2-2}
	\end{figure}

  \begin{table}[H]
  \caption{ Error profile for computing \eqref{2-s1} on Figure \ref{g2-2} meshes.} \label{t2-2}
\begin{center}  
   \begin{tabular}{c|rr|rr}  
 \hline 
$G_i$ & 
       $\|\b u-\b u_h\|_{0}$&$O(h^r)$  &  $\|\nabla_w(\b u-\b u_h)\|_0$ & $O(h^r)$  \\ \hline  
     & \multicolumn{4}{c}{By the $P_1$ \eqref{Vk} WG element}  \\ 
 5&    0.747E-03 &  1.9&    0.848E-01 &  1.0 \\
 6&    0.189E-03 &  2.0&    0.427E-01 &  1.0 \\
 7&    0.472E-04 &  2.0&    0.214E-01 &  1.0 \\
 \hline 
     & \multicolumn{4}{c}{By the $P_2$ \eqref{Vk} WG element}  \\  
 4&    0.361E-03 &  2.8&    0.283E-01 &  2.0 \\
 5&    0.467E-04 &  3.0&    0.699E-02 &  2.0 \\
 6&    0.590E-05 &  3.0&    0.174E-02 &  2.0 \\
 \hline 
          & \multicolumn{4}{c}{By the $P_3$ \eqref{Vk} WG element}  \\ 
 3&    0.565E-03 &  3.8&    0.360E-01 &  3.6 \\
 4&    0.329E-04 &  4.1&    0.303E-02 &  3.6 \\
 5&    0.190E-05 &  4.1&    0.310E-03 &  3.3 \\
 \hline 
\end{tabular} \end{center}  \end{table}

\subsection{Example \ref{ex-3}}\label{ex-3} 
We choose the coefficients and the exact solution of \eqref{model} as 
 \an{ \label{2-2} 2\sigma&=\lambda= \begin{cases} 1,
               & \t{ in } \ \Omega\setminus\Omega_0, \\
    10, & \t{ in } \  \Omega_0, 
        \end{cases} \\
  \label{2-s2}   \b u&=\begin{cases} 2 x (1-x)y\b u_1 ,
    & \t{ in } \ \Omega\setminus\Omega_0, \\
    -2 x^2(1-x)(1-y)\p{ 8 y^2 - 4 y  + x - 1  \\ \qquad\qquad \quad
       x-1}, & \t{ in } \  \Omega_0, 
        \end{cases}
    } where where $\Omega$ and $\Omega_0$ are defined in \eqref{o1},
     and 
     \a{ \b u_1 &= \p{ 22 x^2 y - 196 x y^2 + 72 y^3 - 12 x^2 + 98 x y - 36 y^2 + x\\
     22 x^2 y - \,\; 80 x y^2 + 36 y^3 - 12 x^2 + 40 x y - 18 y^2 + x  }.
                      }
We plot the $P_1$ WG solution for \eqref{2-2}--\eqref{2-s2} on the fifth grid $G_5$ shown
   in Figure \ref{g2-1}, in Figure \ref{gs-3}.

\begin{figure}[H] \centering
  \begin{picture}(320,80)(0,0)
  \put(-25,-215){\includegraphics[width=3.1in]{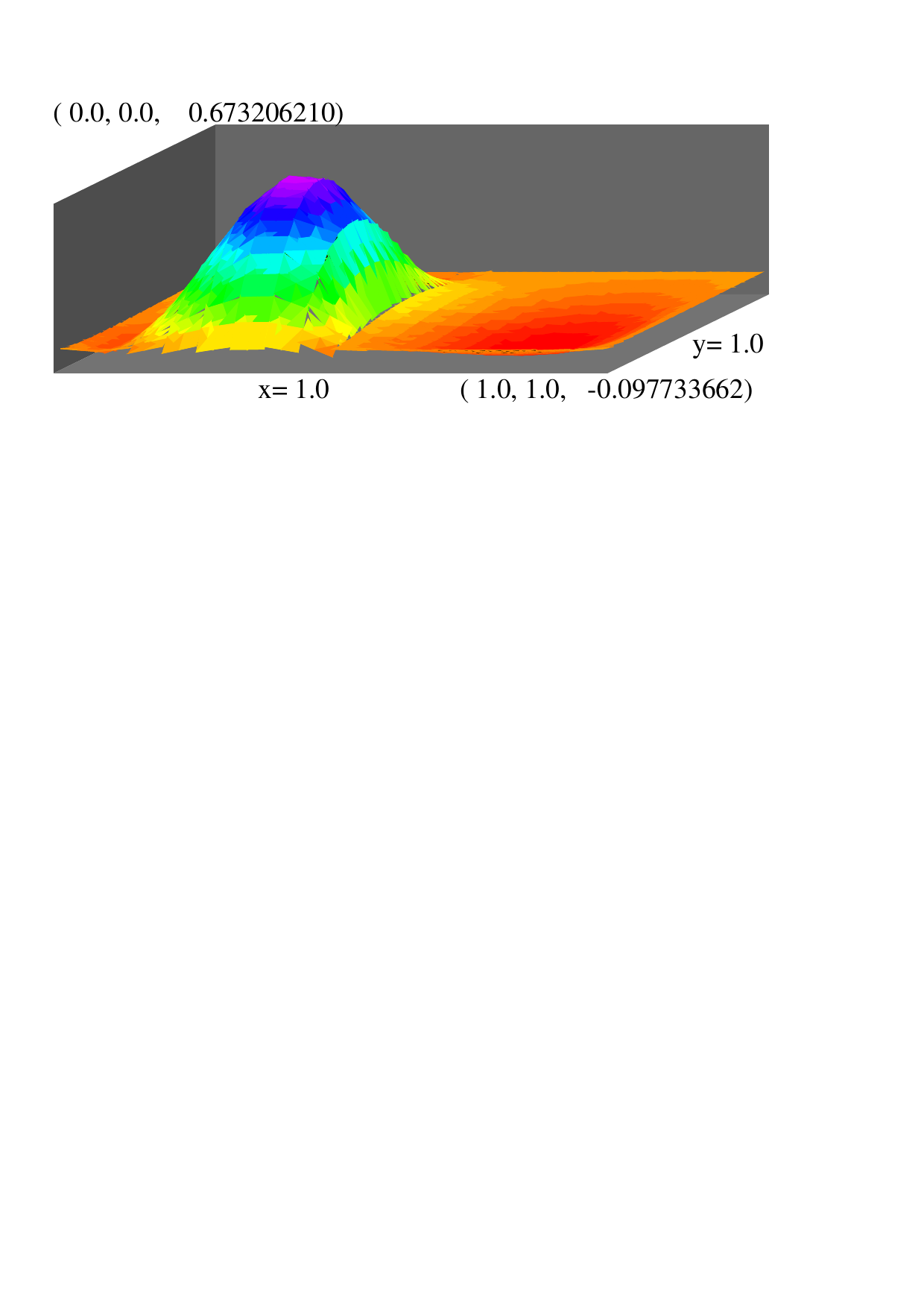}}
  \put(152,-215){\includegraphics[width=3.1in]{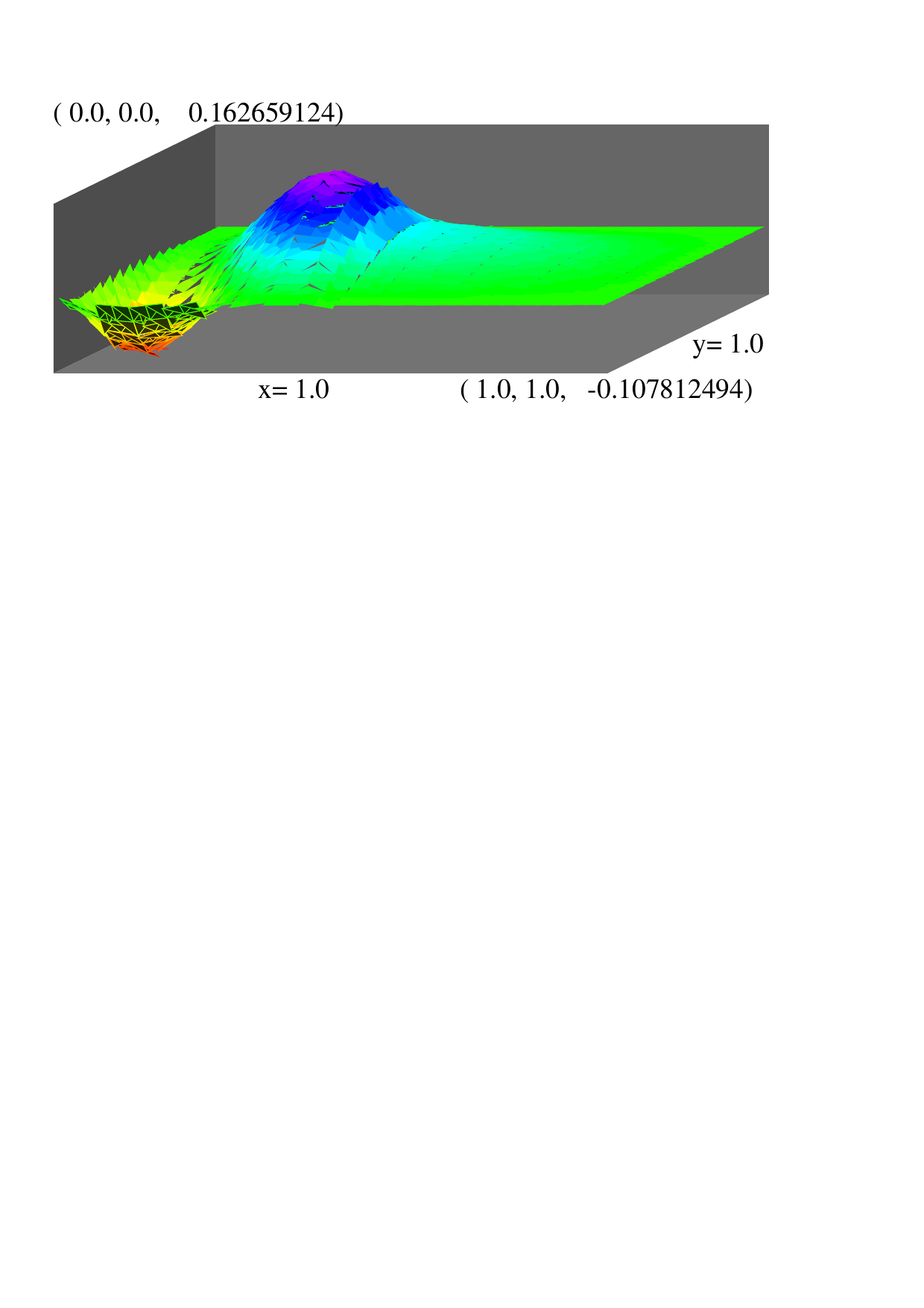}}
  \end{picture}
  \caption{The $(\b u_h)_1$ and $(\b u_h)_2$ of the $P_1$ WG solution
       for \eqref{2-2}--\eqref{2-s2}
     on Grid 5 in Figure \ref{g2-1}. } \label{gs-3}
	\end{figure}

As in Example \ref{ex-2}, we compute the solution \eqref{2-s2} by three WG finite element methods with
 the same  $r_1$ and $r_2$,  on two types of non-convex polygonal grids, shown in Figures \ref{g2-1}
    and \ref{g2-2}.
The results are listed in Tables \ref{t3-1} and \ref{t3-2}  
   where we can see all solutions converge at the
  corresponding optimal orders. 
  
  \begin{table}[H]
  \caption{ Error profile for computing \eqref{2-s2} on Figure \ref{g2-1} meshes.} \label{t3-1}
\begin{center}  
   \begin{tabular}{c|rr|rr}  
 \hline 
$G_i$ & 
       $\|\b u-\b u_h\|_{0}$&$O(h^r)$  &  $\|\nabla_w(\b u-\b u_h)\|_0$ & $O(h^r)$  \\ \hline  
     & \multicolumn{4}{c}{By the $P_1$ \eqref{Vk} WG element}  \\ 
 5&    0.202E-02 &  2.3&    0.238E+00 &  1.6 \\
 6&    0.462E-03 &  2.1&    0.992E-01 &  1.3 \\
 7&    0.112E-03 &  2.0&    0.466E-01 &  1.1 \\
 \hline 
     & \multicolumn{4}{c}{By the $P_2$ \eqref{Vk} WG element}  \\  
 4&    0.230E-02 &  4.0&    0.298E+00 &  3.0 \\
 5&    0.154E-03 &  3.9&    0.374E-01 &  3.0 \\
 6&    0.122E-04 &  3.7&    0.493E-02 &  2.9 \\
 \hline 
          & \multicolumn{4}{c}{By the $P_3$ \eqref{Vk} WG element}  \\ 
 4&    0.399E-03 &  5.0&    0.822E-01 &  4.0 \\
 5&    0.123E-04 &  5.0&    0.508E-02 &  4.0 \\
 6&    0.384E-06 &  5.0&    0.316E-03 &  4.0 \\
 \hline 
\end{tabular} \end{center}  \end{table}

  \begin{table}[H]
  \caption{ Error profile for computing \eqref{2-s2} on Figure \ref{g2-2} meshes.} \label{t3-2}
\begin{center}  
   \begin{tabular}{c|rr|rr}  
 \hline 
$G_i$ & 
       $\|\b u-\b u_h\|_{0}$&$O(h^r)$  &  $\|\nabla_w(\b u-\b u_h)\|_0$ & $O(h^r)$  \\ \hline  
     & \multicolumn{4}{c}{By the $P_1$ \eqref{Vk} WG element}  \\ 
 5&    0.416E-02 &  1.9&    0.451E+00 &  0.9 \\
 6&    0.106E-02 &  2.0&    0.228E+00 &  1.0 \\
 7&    0.265E-03 &  2.0&    0.115E+00 &  1.0 \\
 \hline 
     & \multicolumn{4}{c}{By the $P_2$ \eqref{Vk} WG element}  \\  
 4&    0.273E-02 &  2.8&    0.196E+00 &  2.0 \\
 5&    0.357E-03 &  2.9&    0.483E-01 &  2.0 \\
 6&    0.452E-04 &  3.0&    0.121E-01 &  2.0 \\
 \hline 
          & \multicolumn{4}{c}{By the $P_3$ \eqref{Vk} WG element}  \\ 
 3&    0.483E-02 &  3.7&    0.326E+00 &  3.5 \\
 4&    0.289E-03 &  4.1&    0.271E-01 &  3.6 \\
 5&    0.170E-04 &  4.1&    0.271E-02 &  3.3 \\
 \hline 
\end{tabular} \end{center}  \end{table}

\subsection{Example \ref{ex-4}}\label{ex-4} 
We choose the coefficients and the exact solution of \eqref{model} as 
 \an{ \label{3-1} 2\sigma&=\lambda= \begin{cases} 1,
               & \t{ in } \ \Omega\setminus\Omega_0, \\
    1, & \t{ in } \  \Omega_0, 
        \end{cases} \\
  \label{3-s1}  \b u&=\begin{cases} \frac{2^8}{33} x(1-x) y (1-y)B_4 \p{1\\-4} ,
    & \t{ in } \ \Omega\setminus\Omega_0, \\
    \frac {2^{12}}{11}  
        (  x^2 +   y^2 -  x -  y + \frac 3{16}) B_4 \p{-1\\4}, & \t{ in } \  \Omega_0, 
        \end{cases}
    } where where $\Omega$ and $\Omega_0$ are defined in \eqref{o2} and 
\an{\label{b4} B_4 =  (  x - \frac 14)( x -\frac  3 4)( y - \frac 14) ( y -\frac  34). 
    } 
     
We plot the $P_1$ WG solution for \eqref{3-1}--\eqref{3-s1} on the sixth grid $G_6$ shown
   in Figure \ref{g2-1}, in Figure \ref{gs-4}.

\begin{figure}[H] \centering
  \begin{picture}(320,80)(0,0)
  \put(-25,-215){\includegraphics[width=3.1in]{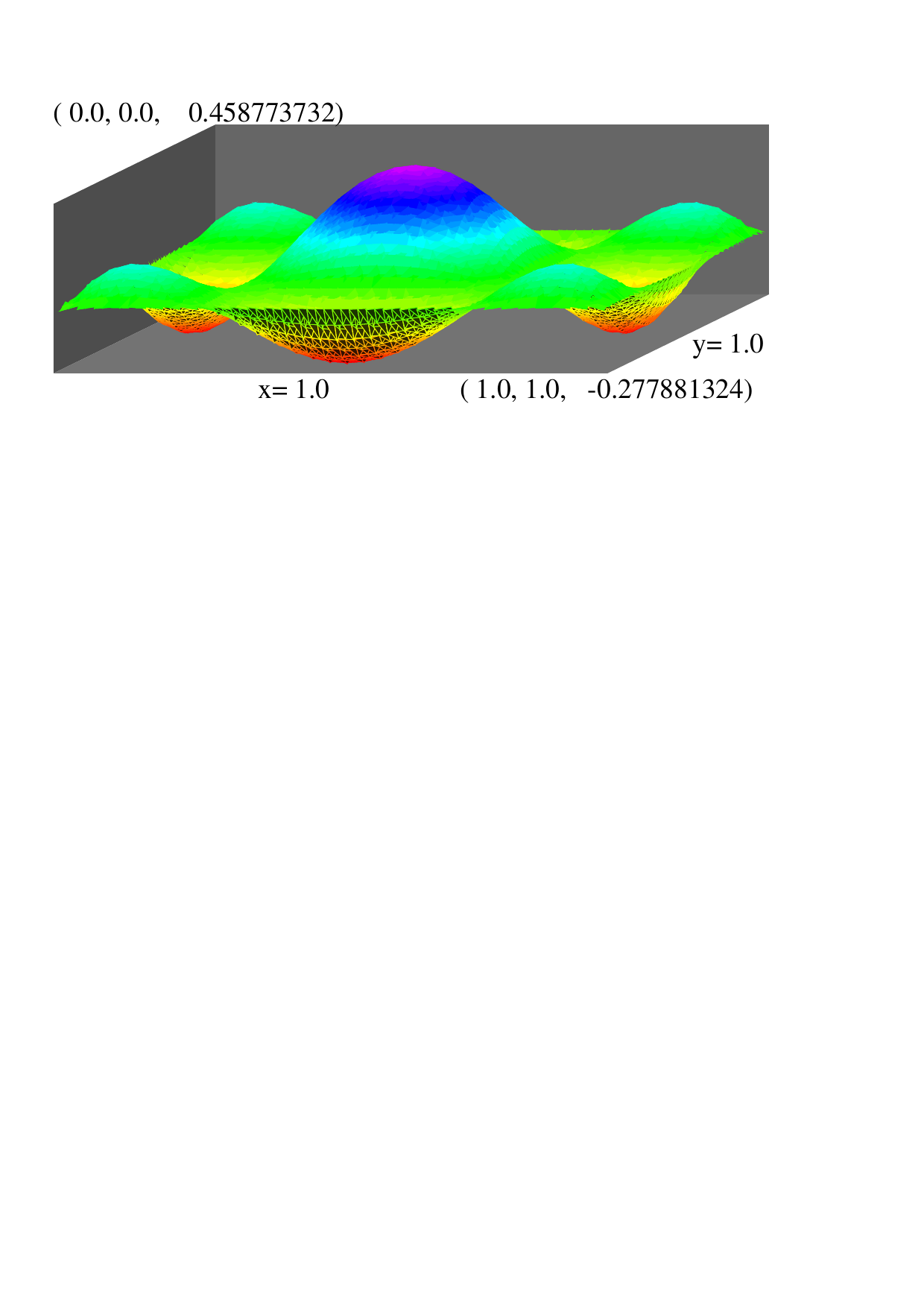}}
  \put(152,-215){\includegraphics[width=3.1in]{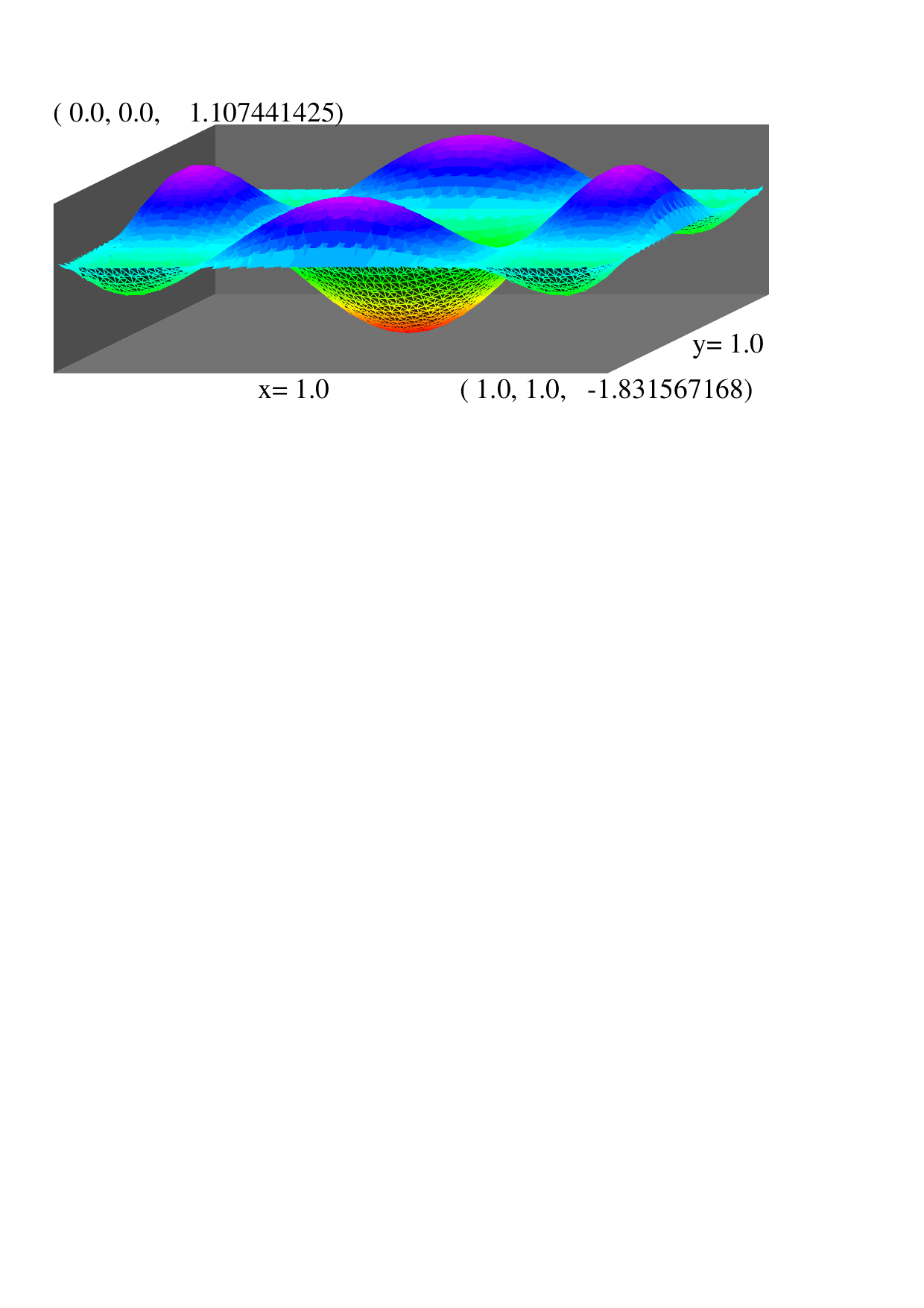}}
  \end{picture}
  \caption{The $(\b u_h)_1$ and $(\b u_h)_2$ of the $P_1$ WG solution
       for \eqref{3-1}--\eqref{3-s1}
     on Grid 6 in Figure \ref{g2-1}. } \label{gs-4}
	\end{figure}

As in the last two examples, we compute the solution \eqref{3-s1} by three WG finite element methods with
 the same  $r_1$ and $r_2$,  on two types of non-convex polygonal grids, shown in Figures \ref{g2-1}
    and \ref{g2-2}.
The results are listed in Tables \ref{t4-1} and \ref{t4-2}  
   where we can see all solutions converge at the
  corresponding optimal orders. 
  
  \begin{table}[H]
  \caption{ Error profile for computing \eqref{3-s1} on Figure \ref{g2-1} meshes.} \label{t4-1}
\begin{center}  
   \begin{tabular}{c|rr|rr}  
 \hline 
$G_i$ & 
       $\|\b u-\b u_h\|_{0}$&$O(h^r)$  &  $\|\nabla_w(\b u-\b u_h)\|_0$ & $O(h^r)$  \\ \hline  
     & \multicolumn{4}{c}{By the $P_1$ \eqref{Vk} WG element}  \\ 
 5&    0.791E-02 &  2.8&    0.121E+01 &  1.8 \\
 6&    0.135E-02 &  2.6&    0.415E+00 &  1.5 \\
 7&    0.283E-03 &  2.3&    0.176E+00 &  1.2 \\
 \hline 
     & \multicolumn{4}{c}{By the $P_2$ \eqref{Vk} WG element}  \\  
 4&    0.233E-01 &  4.2&    0.315E+01 &  2.9 \\
 5&    0.143E-02 &  4.0&    0.401E+00 &  3.0 \\
 6&    0.905E-04 &  4.0&    0.509E-01 &  3.0 \\ 
 \hline 
          & \multicolumn{4}{c}{By the $P_3$ \eqref{Vk} WG element}  \\ 
 4&    0.577E-02 &  5.1&    0.128E+01 &  4.0 \\
 5&    0.179E-03 &  5.0&    0.805E-01 &  4.0 \\
 6&    0.557E-05 &  5.0&    0.504E-02 &  4.0 \\ 
 \hline 
\end{tabular} \end{center}  \end{table}
   
Comparing the results with those in Example \ref{ex-2},  on
  not so bad type-1 grids, the error of solutions for \eqref{3-s1} is not much worse than
    that for \eqref{2-s1}.
But the solution is much worse on bad polygonal grids of Figure \ref{g2-2},
  shown in Tables \ref{t4-2} and \ref{t2-2}.

  \begin{table}[H]
  \caption{ Error profile for computing \eqref{3-s1} on Figure \ref{g2-2} meshes.} \label{t4-2}
\begin{center}  
   \begin{tabular}{c|rr|rr}  
 \hline 
$G_i$ & 
       $\|\b u-\b u_h\|_{0}$&$O(h^r)$  &  $\|\nabla_w(\b u-\b u_h)\|_0$ & $O(h^r)$  \\ \hline  
     & \multicolumn{4}{c}{By the $P_1$ \eqref{Vk} WG element}  \\ 
 5&    0.356E-01 &  1.7&    0.268E+01 &  1.0 \\
 6&    0.948E-02 &  1.9&    0.135E+01 &  1.0 \\
 7&    0.239E-02 &  2.0&    0.676E+00 &  1.0 \\ 
 \hline 
     & \multicolumn{4}{c}{By the $P_2$ \eqref{Vk} WG element}  \\  
 4&    0.252E-01 &  2.8&    0.167E+01 &  2.1 \\
 5&    0.352E-02 &  2.8&    0.396E+00 &  2.1 \\
 6&    0.446E-03 &  3.0&    0.977E-01 &  2.0 \\ 
 \hline 
          & \multicolumn{4}{c}{By the $P_3$ \eqref{Vk} WG element}  \\ 
 4&    0.539E-02 &  4.2&    0.364E+00 &  3.6 \\
 5&    0.314E-03 &  4.1&    0.348E-01 &  3.4 \\
 6&    0.186E-04 &  4.1&    0.391E-02 &  3.2 \\
 \hline 
\end{tabular} \end{center}  \end{table}

\subsection{Example \ref{ex-5}}\label{ex-5} 
We choose the coefficients and the exact solution of \eqref{model} as 
 \an{ \label{3-2} 2\sigma&=\lambda= \begin{cases} 1,
               & \t{ in } \ \Omega\setminus\Omega_0, \\
    10^{-1}, & \t{ in } \  \Omega_0, 
        \end{cases} \\
  \label{3-s2}   \b u&=\begin{cases} \frac{2^{11}}{165}  x(1-x) y (1-y)B_4  \p{1\\-4},
    & \t{ in } \ \Omega\setminus\Omega_0, \\
    \frac {2^{12}}{11}  
        (  x^2 +   y^2 -  x -  y + \frac 3{16}) B_4 \p{-1\\4}, & \t{ in } \  \Omega_0, 
        \end{cases}
    } where $\Omega$ and $\Omega_0$ are defined in \eqref{o2} and $B_4$ is defined in \eqref{b4}.

We plot the $P_1$ WG solution for \eqref{3-2}--\eqref{3-s2} on the sixth grid $G_6$ shown
   in Figure \ref{g2-1}, in Figure \ref{gs-5}.

\begin{figure}[H] \centering
  \begin{picture}(320,80)(0,0)
  \put(-25,-215){\includegraphics[width=3.1in]{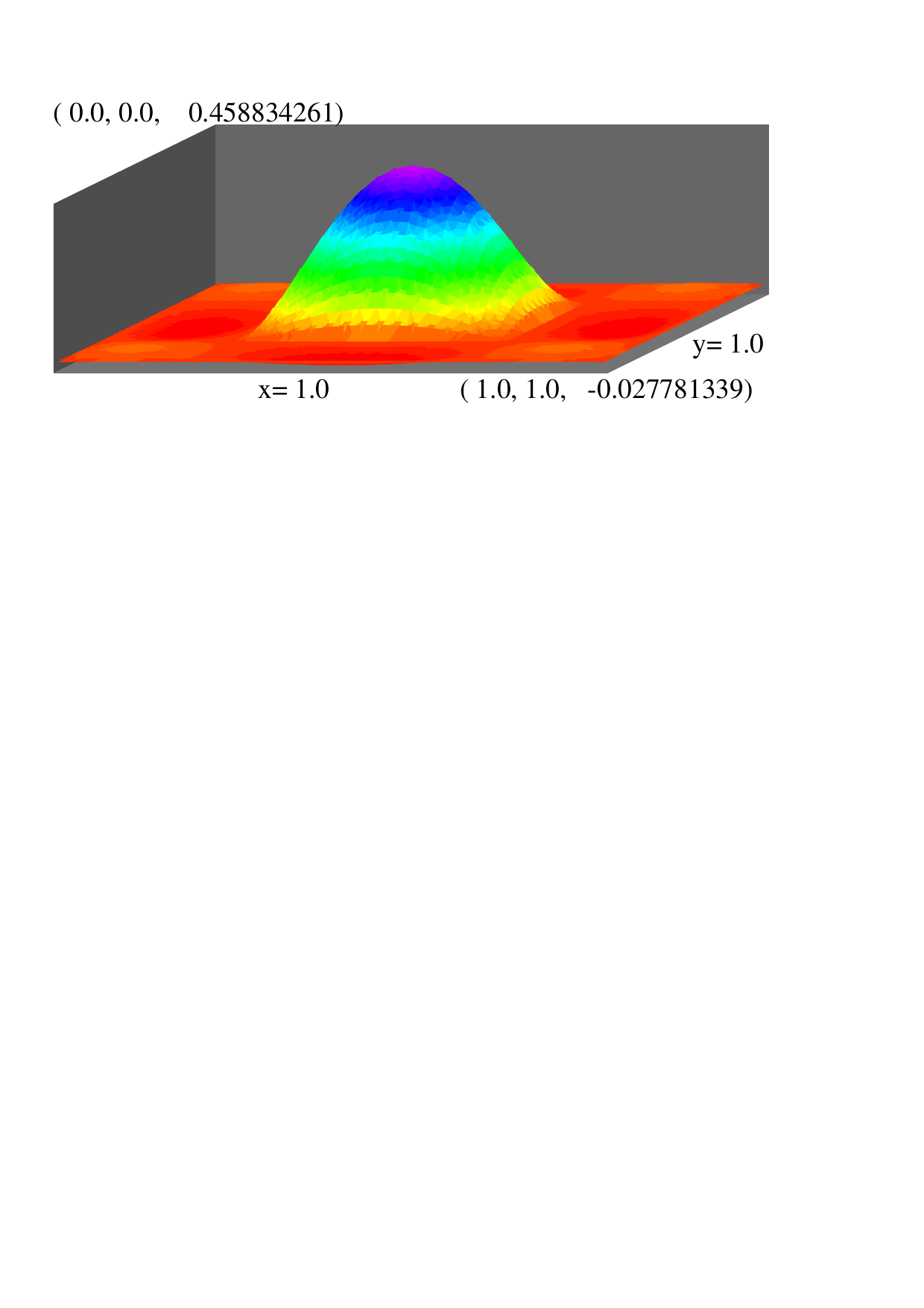}}
  \put(152,-215){\includegraphics[width=3.1in]{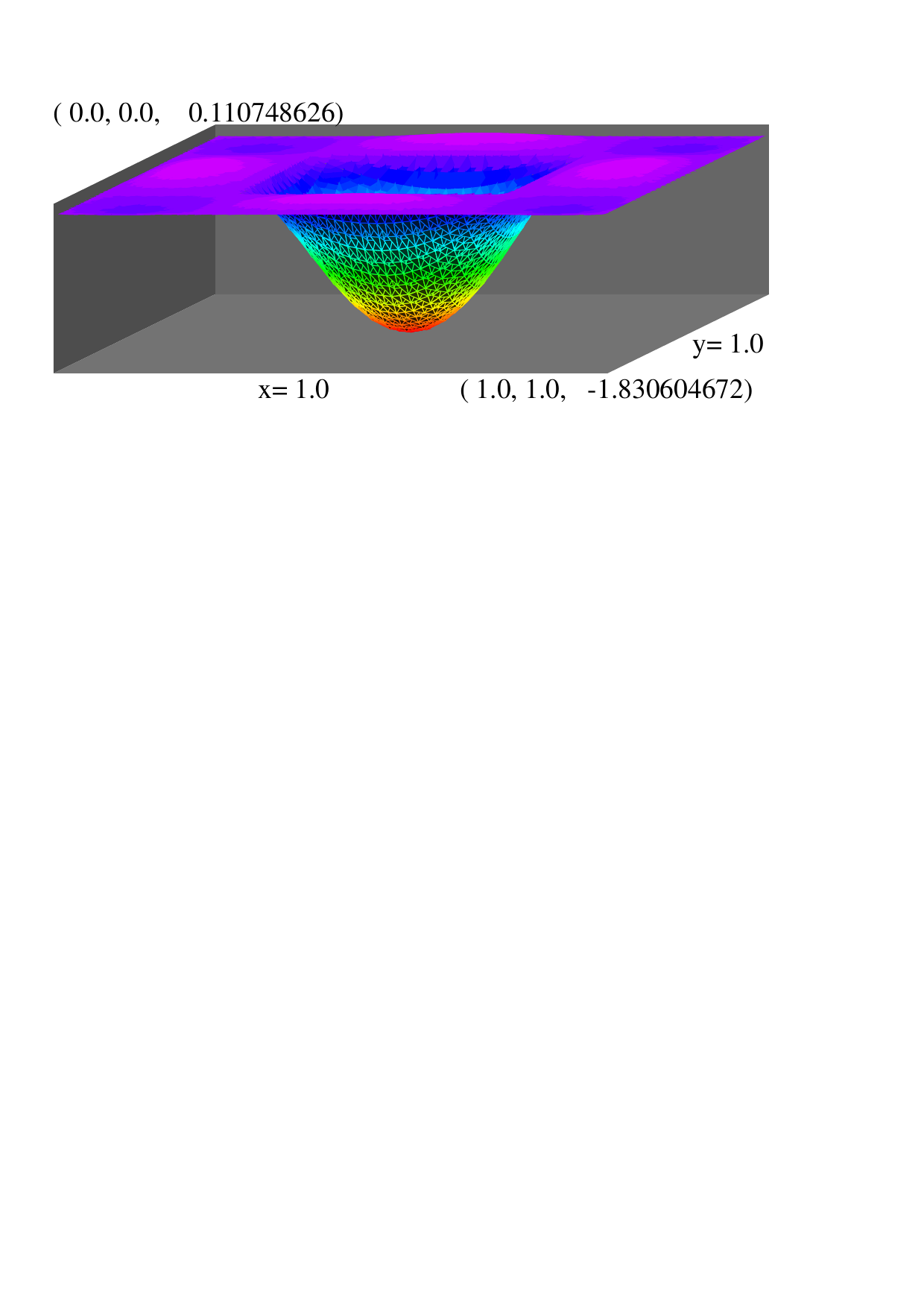}}
  \end{picture}
  \caption{The $(\b u_h)_1$ and $(\b u_h)_2$ of the $P_1$ WG solution
       for \eqref{3-2}--\eqref{3-s2}
     on Grid 6 in Figure \ref{g2-1}. } \label{gs-5}
	\end{figure}

Again we compute the solution \eqref{3-s2} by three WG finite element methods with
 the same  $r_1$ and $r_2$ as in previous examples,
   on two types of non-convex polygonal grids, shown in Figures \ref{g2-1}
    and \ref{g2-2}.
The results are listed in Table \ref{t5-1}, 
   where we can see all solutions converge at the
  corresponding optimal orders. 
Again the solutions on type-1 grids are much more accurate.

  \begin{table}[H]
  \caption{ Error profile for computing \eqref{3-s2}.} \label{t5-1}
\begin{center}  
   \begin{tabular}{c|rr|rr}  
 \hline 
$G_i$ & 
       $\|\b u-\b u_h\|_{0}$&$O(h^r)$  &  $\|\nabla_w(\b u-\b u_h)\|_0$ & $O(h^r)$  \\ \hline  
     & \multicolumn{4}{c}{By the $P_1$ \eqref{Vk} WG element on Figure \ref{g2-1} grids}  \\
 6&    0.362E-03 &  2.5&    0.127E+00 &  1.5 \\
 7&    0.766E-04 &  2.2&    0.561E-01 &  1.2 \\  \hline  
     & \multicolumn{4}{c}{By the $P_1$ \eqref{Vk} WG element on Figure \ref{g2-2} grids}  \\ 
 6&    0.308E-02 &  2.0&    0.470E+00 &  1.0 \\
 7&    0.767E-03 &  2.0&    0.236E+00 &  1.0 \\
 \hline 
     & \multicolumn{4}{c}{By the $P_2$ \eqref{Vk} WG element on Figure \ref{g2-1} grids}  \\  
 5&    0.267E-03 &  4.0&    0.775E-01 &  3.0 \\
 6&    0.171E-04 &  4.0&    0.998E-02 &  3.0 \\
 \hline 
     & \multicolumn{4}{c}{By the $P_2$ \eqref{Vk} WG element on Figure \ref{g2-2} grids}  \\  
 5&    0.714E-03 &  2.9&    0.931E-01 &  2.1 \\
 6&    0.900E-04 &  3.0&    0.229E-01 &  2.0 \\
 \hline 
          & \multicolumn{4}{c}{By the $P_3$ \eqref{Vk} WG element on Figure \ref{g2-1} grids}  \\ 
 5&    0.176E-01 &  5.0&    0.797E+01 &  4.0 \\
 6&    0.548E-03 &  5.0&    0.499E+00 &  4.0 \\
 \hline 
          & \multicolumn{4}{c}{By the $P_3$ \eqref{Vk} WG element on Figure \ref{g2-2} grids}  \\ 
 4&    0.797E-03 &  4.4&    0.633E-01 &  3.6 \\
 5&    0.470E-04 &  4.1&    0.590E-02 &  3.4 \\
 \hline 
\end{tabular} \end{center}  \end{table}

\subsection{Example \ref{ex-6}}\label{ex-6} 
We choose the coefficients and the exact solution of \eqref{model} as 
 \an{ \label{3-3} 2\sigma&=\lambda= \begin{cases} 1,
               & \t{ in } \ \Omega\setminus\Omega_0, \\
    10^{2}, & \t{ in } \  \Omega_0, 
        \end{cases} \\
  \label{3-s3} \b u&=\begin{cases} \frac{2^{10} 5^2 }{33}  x(1-x) y (1-y)B_4  \p{1\\-4},
    & \t{ in } \ \Omega\setminus\Omega_0, \\
    \frac {2^{12}}{11}  
        (  x^2 +   y^2 -  x -  y + \frac 3{16}) B_4 \p{-1\\4}, & \t{ in } \  \Omega_0, 
        \end{cases}
    } where $\Omega$ and $\Omega_0$ are defined in \eqref{o2} and $B_4$ is defined in \eqref{b4}.

We plot the $P_2$ WG solution for \eqref{3-3}--\eqref{3-s3} on grid $G_5$ shown
   in Figure \ref{g2-1}, in Figure \ref{gs-6}.

\begin{figure}[H] \centering
  \begin{picture}(320,80)(0,0)
  \put(-25,-215){\includegraphics[width=3.1in]{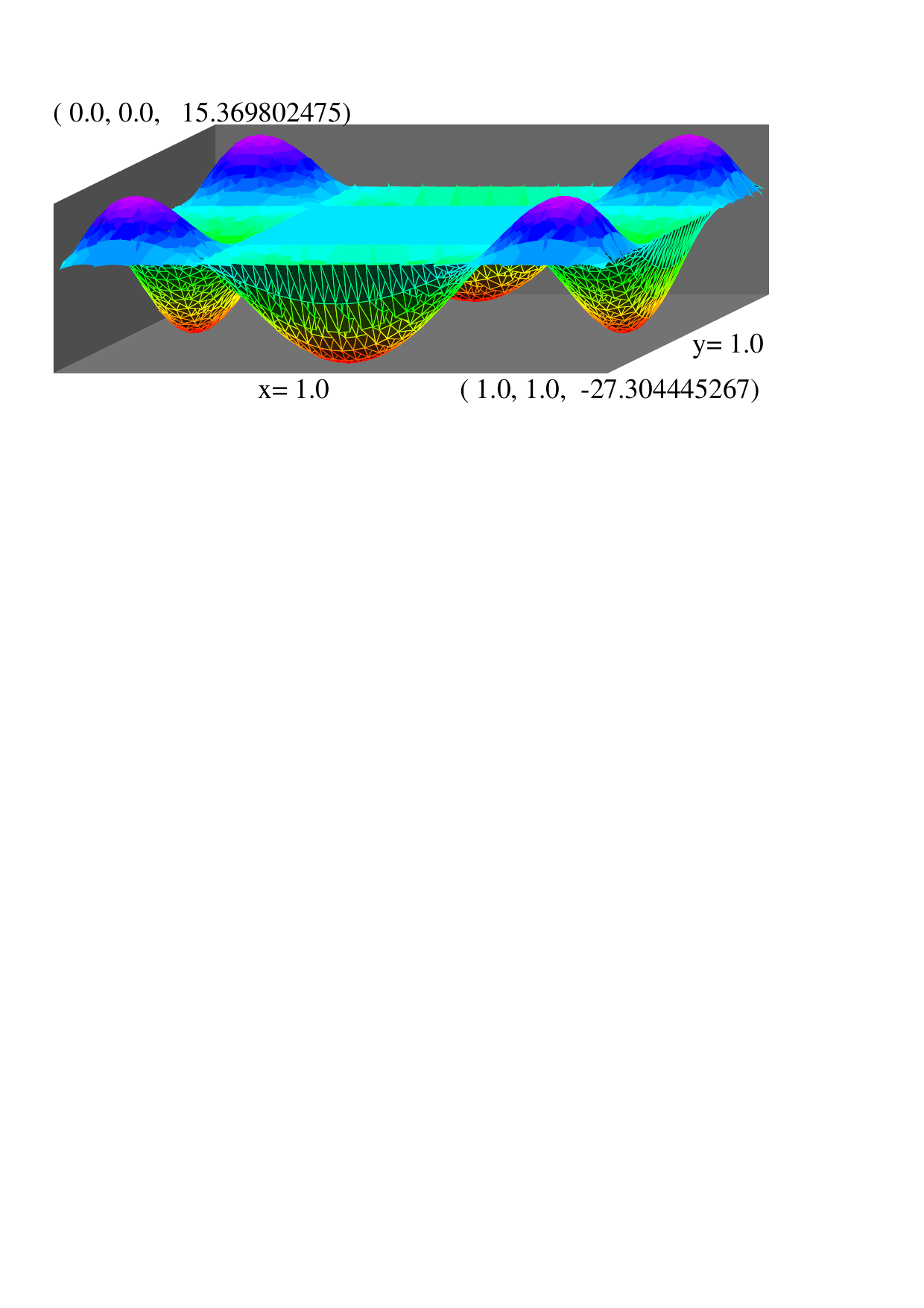}}
  \put(152,-215){\includegraphics[width=3.1in]{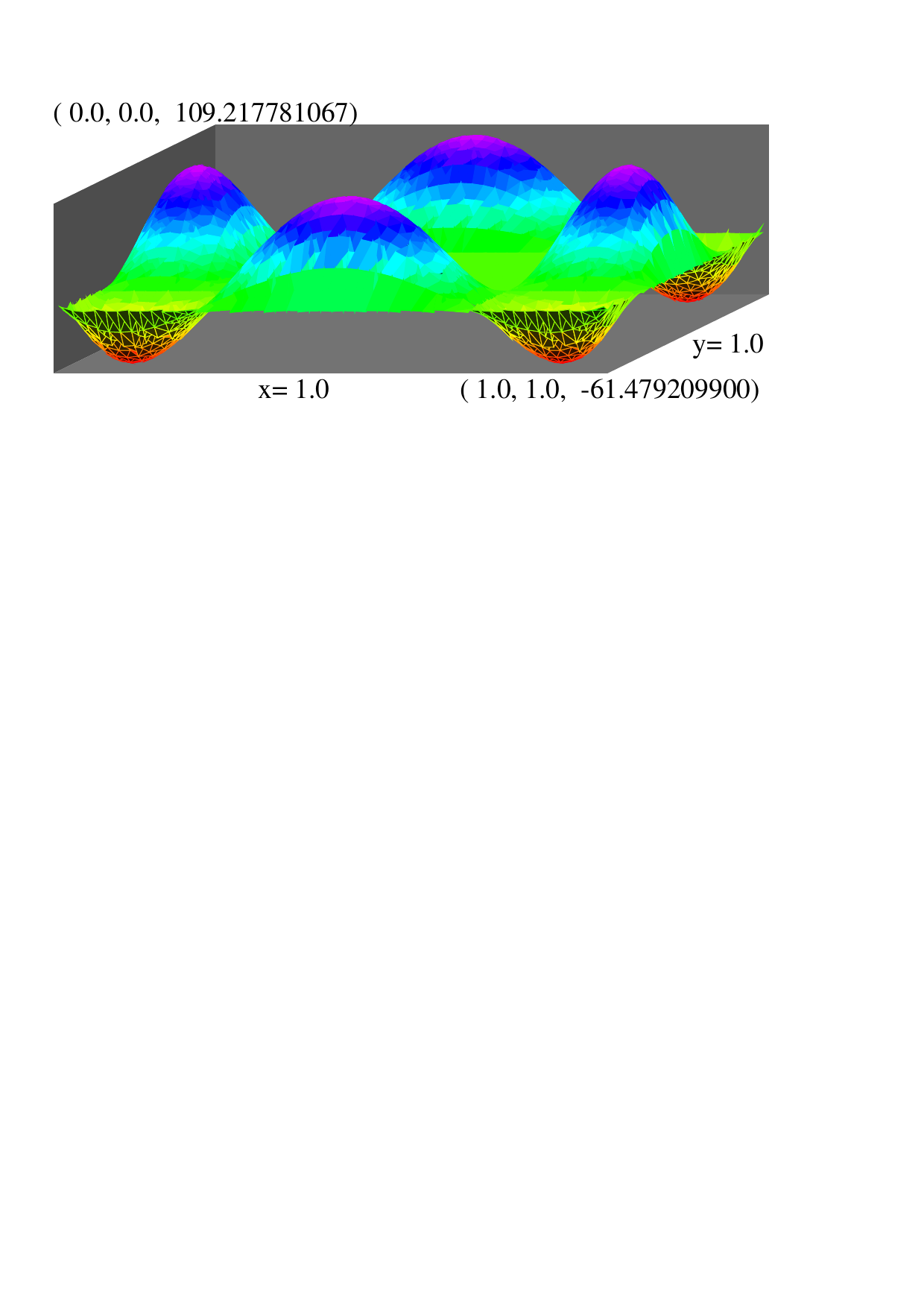}}
  \end{picture}
  \caption{The $(\b u_h)_1$ and $(\b u_h)_2$ of the $P_2$ WG solution
       for \eqref{3-3}--\eqref{3-s3}
     on Grid 5 in Figure \ref{g2-1}. } \label{gs-6}
	\end{figure}

We compute the solution \eqref{3-s3} by three WG finite element methods with
 the same  $r_1$ and $r_2$ as in previous examples,
   on non-convex polygonal grids, shown in Figure  \ref{g2-1}.
The results are listed in Table \ref{t6-1}, 
   where we can see all solutions converge at the
  corresponding optimal orders. 
  Because we do not scale the solution and also because the jump of the
    coefficients is larger,  the errors here are much larger too.

  \begin{table}[H]
  \caption{ Error profile for computing \eqref{3-s3}.} \label{t6-1}
\begin{center}  
   \begin{tabular}{c|rr|rr}  
 \hline 
$G_i$ & 
       $\|\b u-\b u_h\|_{0}$&$O(h^r)$  &  $\|\nabla_w(\b u-\b u_h)\|_0$ & $O(h^r)$  \\ \hline  
     & \multicolumn{4}{c}{By the $P_1$ \eqref{Vk} WG element on Figure \ref{g2-1} grids}  \\
 4&    0.523E+01 &  3.1&    0.395E+03 &  1.7 \\
 5&    0.744E+00 &  2.8&    0.116E+03 &  1.8 \\
 6&    0.125E+00 &  2.6&    0.397E+02 &  1.5 \\
 \hline 
     & \multicolumn{4}{c}{By the $P_2$ \eqref{Vk} WG element on Figure \ref{g2-1} grids}  \\  
 4&    0.225E+01 &  4.1&    0.310E+03 &  2.9 \\
 5&    0.140E+00 &  4.0&    0.395E+02 &  3.0 \\
 6&    0.883E-02 &  4.0&    0.502E+01 &  3.0 \\
 \hline 
          & \multicolumn{4}{c}{By the $P_3$ \eqref{Vk} WG element on Figure \ref{g2-1} grids}  \\ 
 4&    0.567E+00 &  5.0&    0.127E+03 &  4.0 \\
 5&    0.176E-01 &  5.0&    0.797E+01 &  4.0 \\
 6&    0.548E-03 &  5.0&    0.499E+00 &  4.0 \\
 \hline 
\end{tabular} \end{center}  \end{table}

\long\def\skips#1{ }
\skips{
    \a{ 33 \b f_1|_{\Omega_0}
     & = -393216 x^{4} y^{2}+6291456 x^{3} y^{3}-1572864 x^{2} y^{4}+393216 x^{4} y\\
     &\quad \ -8650752 x^{3} y^{2}-6291456 x^{2} y^{3}+1572864 x \,y^{4}-77824 x^{4}\\
     &\quad \ +2949120 x^{3} y+11821056 x^{2} y^{2}+589824 x \,y^{3}-311296 y^{4}\\
     &\quad \ -139264 x^{3} -4841472 x^{2} y-3661824 x \,y^{2}+327680 y^{3}\\
     &\quad \ +349952 x^{2}+1849344 x y+72704 y^{2}  -160512 x-116736 y\\
     &\quad \ +13824,\\
     33 \b f_2|_{\Omega_0}
     & = 6291456 x^{4} y^{2}-1572864 x^{3} y^{3}+1572864 x^{2} y^{4}-6291456 x^{4} y\\
     &\quad \ -10223616 x^{3} y^{2}-786432 x^{2} y^{3}-1572864 x \,y^{4}+1245184 x^{4}\\
     &\quad \ +11649024 x^{3} y+5799936 x^{2} y^{2}+2211840 x \,y^{3}+311296 y^{4}\\
     &\quad \ -2416640 x^{3}-6365184 x^{2} y-1646592 x \,y^{2}-548864 y^{3}\\
     &\quad \ +1368064 x^{2}  +920064 x y+259072 y^{2}-189696 x-14592 y\\
     &\quad \ -3456,\\
      33 \b f_1|_{\Omega_1}
     & = 12288 x^{4}-589824 x^{3} y+368640 x^{2} y^{2}-589824 x \,y^{3}+49152 y^{4}\\
     &\quad \  +270336 x^{3}+516096 x^{2} y+516096 x \,y^{2}+196608 y^{3}\\
     &\quad \ -355584 x^{2}  -387072 x y-302592 y^{2}+155904 x+139776 y\\
     &\quad \ -25632,\\
      33 \b f_2|_{\Omega_1}
     & =   -196608 x^{4}+147456 x^{3} y-1474560 x^{2} y^{2}+147456 x \,y^{3}\\
     &\quad \ -49152 y^{4}+319488 x^{3}+1253376 x^{2} y+1253376 x \,y^{2}\\
     &\quad \ +24576 y^{3}-448512 x^{2}-1285632 x y-236544 y^{2}\\
     &\quad \ +304896 x +240384 y-52992 . }
  }


\end{document}